\newcommand{\ChDan}[1]{{\color{BrickRed} #1}}
\numberwithin{equation}{section}
\newtheorem*{namedtheorem}{\theoremname}
\newcommand{\theoremname}{testing}
\newtheorem{theorem}{Theorem}[section]
\newtheorem{proposition}[theorem]{Proposition}
\newtheorem{proposition-definition}[theorem]
{Proposition-Definition}
\newtheorem{lemma}[theorem]{Lemma}
\newtheorem{conjecture}[theorem]{Conjecture}
\newtheorem{thmintro}{Theorem}
\theoremstyle{definition}
\newtheorem{remark}[theorem]{Remark}
\theoremstyle{remark}
\newcommand{\sD}{\mathscr{D}}
\newcommand{\sE}{\mathscr{E}}
\newcommand{\sX}{\mathscr{X}}
\newcommand{\sY}{\mathscr{Y}}
\newcommand{\defi}[1]{\textsf{#1}} 
\newcommand\cA{\mathcal{A}}
\newcommand\cD{\mathcal{D}}
\newcommand\cO{\mathcal{O}}
\newcommand\cT{\mathcal{T}}
\newcommand\cU{\mathcal{U}}
\def\unorm#1{{\underline{#1}}}
\newcommand{\oX}{{\overline{X}}}
\newcommand{\ocA}{{\overline{\mathcal{A}}}}
\newcommand{\tcA}{{\widetilde{\mathcal{A}}}}
\newcommand{\otcA}{{\overline{\widetilde{\mathcal{A}}}}}
\newcommand\uH{\unorm{H}}
\newcommand\uX{\unorm{X}}
\newcommand\GG{\mathbb{G}}
\newcommand\PP{\mathbb{P}}
\newcommand\QQ{\mathbb{Q}}
\newcommand\ZZ{\mathbb{Z}}
\renewcommand\frm{\mathfrak{m}}
\newcommand\frp{\mathfrak{p}}
\newcommand\frq{\mathfrak{q}}
\newcommand{\Gm}{\GG_m}
\newcommand\Spec{\operatorname{Spec}}
\DeclareMathOperator{\Disc}{Disc}
\let\@wraptoccontribs\wraptoccontribs
\begin{document}

\title[Level structures and Vojta]{Level structures on abelian varieties \\ and Vojta's conjecture}

\author[D. Abramovich]{Dan Abramovich}
\address[Abramovich]{Department of Mathematics, Box 1917, Brown University, Providence, RI, 02912, U.S.A}
\email{abrmovic@math.brown.edu}

\author[A. V\'arilly-Alvarado]{Anthony V\'arilly-Alvarado}
\address[V\'arilly-Alvarado]{Department of Mathematics MS 136, Rice University, 6100 S.\ Main St., Houston, TX 77005, USA}
\email{av15@rice.edu}

\contrib[with an appendix by]{Keerthi Madapusi Pera}
\address[Madapusi Pera]{Department of Mathematics, University of Chicago, 5734 S University Ave, Chicago, IL, USA}
\email{keerthi@math.uchicago.edu}

\subjclass[2010]{Primary 14K10, 14K15; Secondary 11G35, 11G18}

\thanks{Research by D. A. partially supported by NSF grant DMS-1500525. Research by A. V.-A. partially supported by NSF CAREER grant DMS-1352291. Research by K. M. P. is partially supported by NSF grant DMS-1502142.  This paper originated at the workshop ``Explicit methods for modularity of K3 surfaces and other higher weight motives'', held at ICERM in October, 2015. We thank the organizers of the workshop and the staff at ICERM for creating the conditions that sparked this project. We thank {\sc Jordan Ellenberg}, {\sc Aaron Levin}, {\sc Bjorn Poonen}, {\sc Joseph Silverman}, {\sc Doug Ulmer}, and {\sc Yu Yasufuku} for useful discussions, and {\sc Mark Kisin} for comments and for instigating the appendix.}

\date{\today}
\maketitle
\setcounter{tocdepth}{1}


	Fix a number field $K$, a prime $p$, and a positive integer $g$. Assuming {\sc Lang}'s conjecture, we showed in~\cite{Alevels} that there exists an integer $r$ such that no principally polarized abelian variety $A/K$ has full level-$p^r$ structure. Recall that, for a positive integer $m$, a \defi{full level}-$m$ structure on an abelian variety $A/K$ is an isomorphism of group schemes on the $m$-torsion subgroup
	\begin{equation}
		\label{eq:levelstructure}
		A[m] \,\xrightarrow{\ \sim\ }\, (\ZZ/m\ZZ)^g \times (\mu_{m})^g.
	\end{equation}
Our goal in this note is to show how to dispose of the dependency on a fixed prime $p$, at the cost of assuming {\sc Vojta}'s conjecture (\cite[Conjecture 2.3]{VojtaABC}, Conjecture~\ref{conj:Vojta} below).

	\begin{thmintro}
		\label{thm:main2}
		Let $K$ be a number field, and let $g$ be a positive integer. Assume {\sc Vojta}'s conjecture. Then there is an integer $m_0$ such that for any $m > m_0$ no principally polarized abelian variety $A/K$ of dimension $g$ has full level-$m$ structure.
	\end{thmintro}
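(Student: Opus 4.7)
The plan is to translate the existence of a PPAV with full level-$m$ structure into an integral point on a suitable toroidal compactification of the moduli stack $\mathcal{A}_g[m]$, and then rule out such points for $m$ large by applying Vojta's conjecture to the log pair.

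First, I would recall that a PPAV $A/K$ of dimension $g$ carrying a full level-$m$ structure with $m \geq 3$ automatically has semistable reduction over $\Spec \mathcal{O}_K$ (Serre--Tate--Raynaud), so $A$ extends to an $\mathcal{O}_K$-valued point of a smooth toroidal compactification $\overline{\mathcal{A}_g[m]}$ with snc boundary divisor $D_m$. The theorem thus reduces to showing that, for $m \gg 0$, the set of integral $K$-points of the log pair $(\overline{\mathcal{A}_g[m]}, D_m)$ is empty. The next step is to verify the positivity hypothesis of Vojta's conjecture for this pair, namely that $K_{\overline{\mathcal{A}_g[m]}} + D_m$ is big for $m$ large. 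This follows from the Mumford--Tai theory on Siegel modular varieties: up to boundary corrections, $K + D$ is proportional to a positive multiple of the Hodge bundle, whose positivity grows with $m$ once one divides by the covering degree of $\mathcal{A}_g[m]\to\mathcal{A}_g$.

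Having secured bigness, I apply Vojta's conjecture: for every $\epsilon>0$ there is a proper Zariski-closed $Z_{m,\epsilon}\subset \overline{\mathcal{A}_g[m]}$ such that every integral point $P$ of $(\overline{\mathcal{A}_g[m]}, D_m)$ lying outside $Z_{m,\epsilon}$ satisfies
\[
h_{K+D_m}(P) \;\leq\; \epsilon\, h_H(P) + d_K(P) + O(1),
\]
for a fixed ample $H$. Since $K$ is fixed the discriminant term $d_K(P)$ is bounded. Combined with the bigness comparison $h_{K+D_m} \geq c_m \, h_H - O(1)$, where $c_m \to \infty$ with $m$, the inequality becomes impossible once $m$ is large, ruling out integral points outside $Z_{m,\epsilon}$.

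The main obstacle is handling the exceptional locus $Z_{m,\epsilon}$, about which Vojta's conjecture says nothing explicit; and crucially, one must ensure uniformity as $m$ ranges over all integers, not just powers of a fixed prime $p$. The plan here is induction on $g$: an integral point confined to an irreducible component $W \subset Z_{m,\epsilon}$ ought to correspond to a PPAV with a non-trivial isogeny decomposition or extra endomorphism structure, so that $W$ is a Shimura subvariety over which the level-$m$ structure descends to one on a lower-dimensional abelian variety (or one on a smaller PEL moduli), to which the theorem applies inductively. Making this stratification work uniformly in $m$ — choosing compactifications compatibly, controlling $Z_{m,\epsilon}$ across all $m$ simultaneously, and matching the bigness of $K+D$ on each Shimura substratum — is the part of the argument that was not needed in the fixed-prime, Lang-conjecture setting of~\cite{Alevels}, and is where I expect the technical work to concentrate; this is presumably also where the appendix of Madapusi Pera enters, by supplying the required integral/boundary statements on the moduli side.
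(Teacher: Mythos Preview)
Your proposal has two genuine gaps. First, the reduction to integral points of $(\overline{\mathcal{A}_g[m]},D_m)$ is incorrect: full level-$m$ structure with $m\geq 3$ forces semistable reduction only at primes \emph{not} dividing $m$, and even at those primes semistable reduction is not good reduction, so the $\mathcal{O}_K$-point of the compactification may well land in $D_m$. The truncated counting function $N^{(1)}_K(D_m,x)$ is therefore nonzero in general, and you have simply dropped it from your Vojta inequality. Second, applying Vojta separately on each $\overline{\mathcal{A}_g[m]}$ cannot give a uniform statement: the exceptional set $Z_{m,\epsilon}$ and the $O(1)$ depend on $m$, and your proposed descent to Shimura subvarieties (which the exceptional locus has no reason to be) with induction on $g$ does not address this.

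The paper's route is quite different. Theorem~\ref{thm:main2} is not proved directly; it is obtained by combining the new Theorem~\ref{thm:main} (the case $m=p$ prime) with \cite[Theorem~1.1]{Alevels} (the case $m=p^r$ for a fixed prime, which needs only Lang's conjecture, itself a consequence of Vojta's): if $m$ is large then either some prime factor or some prime-power factor is large, and full level-$m$ structure restricts accordingly. For Theorem~\ref{thm:main}, the paper works on a \emph{single} fixed substack $X\subset\widetilde{\mathcal{A}}_g$ (an irreducible component of the Zariski closure of the offending points) with compactification $(\overline{X}',D)$, and applies a stacky form of Vojta (Proposition~\ref{prop:VojtaToVojtaDM}):
\[
N^{(1)}_K(D,x)+d_K(\mathcal{T}_x)\ \geq\ h_{K_{\overline{X}'}(D)}(x)-\delta h_H(x)-O(1).
\]
The heart of the argument is that for $x$ with full level-$p$ structure and $p$ large, \emph{both} left-hand terms are bounded by $h_{\epsilon' D}(x)+O(1)$: the counting term because $\pi_p^*\mathcal{D}=p\cdot\mathcal{D}^{[p]}$ forces $n_q(\mathcal{D},x)\geq p\,\alpha(X)$ whenever positive (Lemma~\ref{lem:primelevels-truncated}), and the stacky discriminant because it grows like $\log p$ while a Flexor--Oesterl\'e/Silverberg point-count forces bad reduction above $2$, making $h_D(x)$ grow linearly in $p$ (Lemma~\ref{lem:primelevels}). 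The exceptional locus is then disposed of by Noetherian induction on closed substacks of $\widetilde{\mathcal{A}}_g$, not by induction on $g$. The appendix supplies exactly one ingredient: that $\pi_m^*\mathcal{D}=m\cdot\mathcal{D}^{[m]}$ continues to hold at primes dividing $m$ (Proposition~\ref{prop:full level structure}), which is what makes the multiplicity bound valid at $q\mid p$.
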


	Theorem~\ref{thm:main2} follows from combining~\cite[Theorem~1.1]{Alevels} and a new result in this note:

	\begin{thmintro}
		\label{thm:main}
		Let $K$ be a number field, and let $g$ be a positive integer. Assume {\sc Vojta}'s conjecture. Then there is an integer $m_0$ such that for any prime $p > m_0$ no principally polarized abelian variety $A/K$ of dimension $g$ has full level-$p$ structure.
	\end{thmintro}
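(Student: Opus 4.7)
The plan is to apply {\sc Vojta}'s conjecture (Conjecture~\ref{conj:Vojta}) to a smooth toroidal compactification of the moduli scheme $\cA_g(p)$ of principally polarized abelian varieties with full level-$p$ structure. For $p \geq 3$, $\cA_g(p)$ is a fine moduli scheme; let $\overline{X_p}$ be a smooth projective compactification with boundary $D_p$ a simple normal crossings divisor. The keystone is the log canonical identity
\[ K_{\overline{X_p}} + D_p \;=\; (g+1)\,\lambda_p, \]
where $\lambda_p$ is the pullback of the Hodge line bundle from a toroidal compactification of $\cA_g$. In the base case of $\cA_g$ this is due to {\sc Mumford} and {\sc Faltings}--{\sc Chai}; its extension to level covers in a way compatible across varying $p$ plausibly forms the technical content of the appendix.

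A $K$-point $P \in \cA_g(p)(K)$ corresponds to a pair $(A, \alpha)$, with $A/K$ a PPAV of dimension $g$ and $\alpha$ a full level-$p$ structure. Applying {\sc Vojta}'s conjecture to $(\overline{X_p}, D_p)$: for every $\epsilon > 0$ there is a proper Zariski-closed subset $Z_p \subset \overline{X_p}$ such that for all $P \in \cA_g(p)(K) \setminus Z_p$,
\[ h_{K_{\overline{X_p}} + D_p}(P) \;\leq\; \epsilon\, h_{H_p}(P) + O(1), \]
with $H_p$ a chosen ample class on $\overline{X_p}$, and using that the discriminant term vanishes on $K$-points. The log canonical identity rewrites this as $(g+1)\, h_{\mathrm{Fal}}(A) \leq \epsilon\, h_{H_p}(P) + O(1)$, where $h_{\mathrm{Fal}}$ denotes the stable {\sc Faltings} height.

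The crucial step is to choose $H_p$ so that $h_{H_p}(P)$ is comparable to $h_{\mathrm{Fal}}(A)$ with constants uniform in $p$. Taking $H_p = \lambda_p + \delta\,[D_p]$ for a small fixed $\delta > 0$ (ample on a suitable model) gives $h_{H_p}(P) \leq h_{\mathrm{Fal}}(A) + \delta\, h_{D_p}(P) + O(1)$; with $\epsilon$ small enough, rearrangement yields a uniform bound $h_{\mathrm{Fal}}(A) \leq C$ with $C$ independent of $p$. {\sc Faltings}' isogeny-finiteness theorem produces only finitely many $K$-isomorphism classes of PPAVs $A/K$ of dimension $g$ with Faltings height at most $C$; for $p$ larger than the exponent of the union of their $K$-rational torsion groups, no such $A$ contains $(\ZZ/p)^g$ among its $K$-rational points, contradicting the level-$p$ structure.

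The main obstacle is uniformity in $p$ of the constants: the ample class $H_p$, the error term $O(1)$, and the exceptional set $Z_p$. The exceptional locus $Z_p$ should decompose into boundary strata of $D_p$ together with special Shimura-type subvarieties; a noetherian induction on dimension reduces rational points in $Z_p$ to lower-dimensional sub-moduli, each amenable to the same argument. The uniform comparison of ample classes across $p$---and the auxiliary control of the $h_{D_p}$-term for PPAVs with bad reduction---is where I expect the appendix to supply the necessary geometric input on integral toroidal models of $\cA_g(p)$ as $p$ varies.
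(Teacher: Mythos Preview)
Your proposal has a genuine structural gap that the paper is specifically designed to avoid. You apply Vojta's conjecture to the varying varieties $\overline{X_p}$, one for each prime $p$. But Conjecture~\ref{conj:Vojta} is a statement about a \emph{fixed} variety: the exceptional set $Z_p$, the implied constant in $O(1)$, and even the choice of ample class $H_p$ are produced separately for each $p$, with no mechanism for uniformity as $p$ varies. You flag this yourself, but the hope that the appendix supplies such uniformity is misplaced: the appendix (Proposition~\ref{prop:full level structure}) establishes that $\pi_m^*\cD = m\cdot\cD^{[m]}$ as Cartier divisors over $\ZZ$, i.e.\ boundary ramification data, not anything about uniform Vojta constants or exceptional loci. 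There is no known way to extract a bound ``for all $p$'' from infinitely many separate applications of Vojta's conjecture. A secondary issue: your displayed inequality $h_{K_{\overline{X_p}}+D_p}(P)\le \epsilon\,h_{H_p}(P)+O(1)$ silently drops the term $N^{(1)}_K(D_p,P)$ from the left side of Vojta's inequality; this term is nonzero whenever $A$ has bad reduction, so it cannot simply be discarded.

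The paper's maneuver is to apply Vojta \emph{once}, on a fixed object. One works on (a resolution of a substack of) the stack $\otcA_g$ itself, not on the level covers. By Noetherian induction one reduces to a fixed irreducible closed substack $X\subset\tcA_g$ arising as a component of $\overline{\tcA_g(K)_{p\ge m_0}}$; then $K_{\oX'}(D)$ is big by~\cite[Theorem~1.7]{Alevels}, and Proposition~\ref{prop:VojtaToVojtaDM} (Vojta for stacks) gives a single inequality with a single exceptional set $Z$ and a single $O(1)$. The level-$p$ structure enters only through the boundary multiplicity $\pi_p^*\sE = p\,\sE_p$, which forces $n_q(\sD,x)\ge p\,\alpha(X)$ whenever $n_q(\sD,x)>0$ (Proposition~\ref{prop:betterthanzero}); this is exactly what bounds $N^{(1)}_K(D,x)$ by $h_{\epsilon' D}(x)$ (Lemma~\ref{lem:primelevels-truncated}) and, via a Flexor--Oesterl\'e/Silverberg point-count forcing bad reduction above $2$, bounds the stack discriminant $d_K(\cT_x)$ by $h_{\epsilon' D}(x)$ as well (Lemma~\ref{lem:primelevels}). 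The price of working on the stack is that $K$-rational points acquire a nontrivial discriminant term $d_K(\cT_x)$, which your scheme-level setup avoids but which the paper controls directly.
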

	

\tableofcontents


\section{Introduction}


\subsection{{\textsc{Vojta}}'s conjecture for varieties}

	Before {\sc Merel} proved that torsion on elliptic curves over number fields is uniformly bounded \cite{Merel}, it was known that statements related to {\sc Masser--Oesterl\'e}'s $abc$ conjecture \cite[Conjecture A-B-C]{Frey} or {\sc Szpiro}'s conjecture \cite[Conjecture 1]{Szpiro}  imply such bounds; see {\sc Frey} \cite[Corollary~2.2]{Frey}, {\sc Hindry--Silverman} \cite[Theorem 7.1]{Hindry-Silverman-heights}, {\sc Flexor-Oesterl\'e} \cite{Flexor-Oesterle}. In this paper, we use {\sc Vojta}'s conjecture \cite[Conjecture 2.3]{VojtaABC} as a higher dimensional analogue of the $abc$ conjecture, to study level structures on abelian varieties of dimension $>1$.

	{\sc Vojta}'s conjecture  is a quantitative statement, comparing heights $h_{K_X(D)}(x)$, with respect to the log canonical divisor $K_X(D)$, of rational points $x$ in general position on a {\em projective variety} $X$ over a number field $K$, with the truncated counting function $N^{(1)}_{K}(D,x)$ of such points (see Equation \eqref{eq:N1}) with respect to a normal crossings divisor $D$. The simplest statement, for $K$-rational points, says that if  $K_X(D)$ is big then, for small $\delta$, 
	\[
		N^{(1)}_{K}(D,x) \ \  \geq \ \  (1-\delta)h_{K_X(D)}(x) - O(1)
	\]
for all rational points $x\in X(K)$ outside a Zariski-closed proper subset.

	The general notation is, unfortunately, involved, and explained in \S\ref{Sec:preliminaries}. The conjecture does have qualitative corollaries which are easier to explain. The truncated counting function $N^{(1)}_{K}(D,x)$ measures how often the point $x$ reduces to a point on $D$ modulo primes of $K$. In particular, when $D$ is empty then $N^{(1)}_{K}(D,x)=0$, in which case the statement says that the height $h_{K_X(D)}(x)$ is bounded. Since the height of a big divisor is a counting function outside a Zariski-closed subvariety, this implies that rational points are not Zariski-dense. So {\sc Vojta}'s conjecture implies {\sc Lang}'s conjecture:  the rational points on a positive dimensional variety of general type are not Zariski-dense. More generally, $N^{(1)}_{K}(D,x)=0$ whenever $x$ extends to an integral point on $X \smallsetminus D$. This recovers the statement of the {\sc Lang-Vojta} conjecture: the integral points on a positive dimensional variety of {\em logarithmic}  general type are not Zariski-dense.

	{\sc Campana}  studied varieties where divisors between $K_X$ and $K_X+D$ are big, and for algebraic curves,  stated qualitative conjectures interpolating between {\sc Faltings}' and {\sc Siegel}'s theorems. We will study these intermediate conjectures in higher dimensions in a follow-up note. These statements are qualitative consequences of {\sc Vojta}'s conjecture.

	Our arguments below use {\sc Vojta}'s conjecture for points of bounded degree, which requires an additional  discriminant term $d_K(K(x))$: for small $\delta$ the inequality 
	\[
		N^{(1)}_{K}(D,x) + d_K(K(x))\ \  \geq \ \  h_{K_X(D)}(x)- \delta h_H(x) - O_{[K(x):K]}(1)
	\]
is conjectured to hold,  away from a Zariski-closed subset, where $H$ is a big divisor. Note that when $x \in X(K)$, we have $d_K(K(x)) = 0$.


\subsection{{\textsc{Vojta}}'s conjecture for stacks}

	Theorem \ref{thm:main} is decidedly about rational points on {\em stacks}, not varieties. Specifically, an abelian variety $A/K$ corresponds to a rational point on the moduli \emph{stack} $\tcA_g$ of principally polarized abelian varieties. It should thus come as no surprise that, to prove Theorem~\ref{thm:main}, we require a version of {\sc Vojta}'s conjecture for Deligne-Mumford stacks~(Proposition~\ref{prop:VojtaToVojtaDM}), which we deduce from {\sc Vojta}'s original conjecture. 

	Surprisingly, unlike the case of varieties, {\sc Vojta}'s conjecture for stacks requires a discriminant term even for a $K$-rational point: the image of such a point $x$ in $X$ is naturally a stack $\cT_x$ that is in general ramified over the ring of integers $\cO_K$. The corresponding inequality
	\begin{equation}
		\label{eq:Vojtaintro}
		N^{(1)}_{K}(D,x) + d_K(\cT_x)\ \  \geq \ \ h_{K_X(D)}(x)- \delta h_H(x) - O(1)
	\end{equation}
holds away from a Zariski-closed proper subset, conditional on {\sc Vojta}'s conjecture for varieties.

	Proposition~\ref{prop:VojtaToVojtaDM} is proved by passing to a branched covering $Y \to X$ by a variety. Such a covering was constructed by {\sc Kresch} and {\sc Vistoli} in \cite[Theorem 1]{KreschVistoli}; we adapt their construction to stacks with normal crossings divisors in Proposition \ref{lem:KV}.

	While the discriminant term comes naturally from {\sc Vojta}'s statement for points of bounded degree,  one might contemplate doing away with it. It is, however, indispensable, at least if one is to state a conjecture that is not patently false. Consider the root stack $X = \PP^2(\sqrt C)$, where $C$ is a curve of degree $\geq 7$, and let $D=\emptyset$. Then $K_X$ is ample, $N^{(1)}_{K}(D,x) \equiv 0$, and yet there is a dense collection of rational points on the open subset $\PP^2 \smallsetminus C \subset X$.


\subsection{Abelian varieties, counting functions and discriminants}

	Fix an integer $m_0$ and consider the set $\tcA_g(K)_{p\geq m_0}$ of points corresponding to abelian varieties admitting full level-$p$ structures, for primes $p\geq m_0$. Our task is to show that for large $m_0$ this set is empty. To this end, it is natural to focus on an irreducible component  $X \subset \overline{\tcA_g(K)_{p\geq m_0}}$ of the Zariski-closure. This leads to the following setup: Consider a closed substack $X \subset \tcA_g$, a resolution of singularities $X' \to X$, and a normal crossings compactification $\oX'$ with boundary divisor $D$. Following {\sc Zuo} \cite[Theorem 0.1(ii)]{Zuo}, we showed  in \cite[Theorem 1.7]{Alevels} that $K_{\oX'}(D)$ is big.

	With a version of {\sc Vojta}'s conjecture for stacks in hand, the key to proving Theorem \ref{thm:main} is to show that,  for points $x\in X'(K)$ corresponding to abelian varieties with full level-$p$ structure, the terms $N^{(1)}_{K}(D,x)$ and $d_K(\cT_x)$ on the left hand side of~\eqref{eq:Vojtaintro} are small compared to the height $h_{K_{\oX'}(D)}(x)$, as soon as $p$ is large enough. 

	To this end, we show that each one of these terms is bounded by a small fraction of the height $h_D(x)$; see Lemmas \ref{lem:primelevels} and \ref{lem:primelevels-truncated}. First, to bound the truncated counting function $N^{(1)}_{K}(D,x)$ we use the fact that the compactified moduli space $\otcA_g^{[p]}$ of abelian varieties with full level-$p$ structure is highly ramified over the compactification $\otcA_g$ along the boundary. This is well-known away from characteristic $p$; see  \cite[Proposition 4.1]{Alevels}. The remaining case of characteristic $p$ is proven in Proposition \ref{prop:full level structure} as part of the Appendix by Keerthi {\sc Madapusi Pera}, where the structure of the boundary is  described using {\sc Mumford}'s construction.

	Second, using standard discriminant bounds, we show the discriminant term $d_k(\cT_x)$ grows at most like $\log p$. Meanwhile, the height $h_D(x)$ grows at least linearly in $p$. For this we use a point-counting argument  of {\sc Flexor--Oesterl\'e} \cite[Th\'eor\`eme  3]{Flexor-Oesterle} and {\sc Silverberg} \cite[Theorem 3.3]{Silverberg-finite-order} (see also  {\sc Kamienny}  \cite[\S6(2a)]{Kamienny-root-p}) to show that $x$ reduces to $D$ modulo a small prime, whose contribution to  $h_D(x)$ is at least proportional to $p$, since  $\otcA_g^{[p]}\to \otcA_g$ is highly ramified. 

	Together, these two bounds can be leveraged to show that the totality of points $x \in X'(K)$ corresponding to abelian varieties over $K$ with full level-$p$ structure for $p \gg 0$ is contained in a Zariski-closed proper subset of $X'$. A Noetherian induction argument allows us to deduce Theorem~\ref{thm:main} from this result.


\section{Preliminaries}
	\label{Sec:preliminaries}
	\addtocontents{toc}
	{\hspace{0.24in} We review, and extend to algebraic stacks, {\sc Vojta}'s notation of relative

	\hspace{0.26in} discriminants $D_K(E)$, multiplicities $n_q(\cD,x)$, and truncated counting 

	\hspace{0.26in} functions $N^{(1)}_K(D,x)$. We construct coverings of stacks by schemes

	\hspace{0.26in} (Proposition \ref{lem:KV}).
	}

	In this section, we set up notation that will remain in force throughout. Let $K$ be a number field, and let $\overline{K}$ be a fixed algebraic closure of $K$. We write $\cO_K$ for the ring of integers of $K$, and $\Disc(\cO_K)$ for its discriminant. We denote by $M^0_K$ the set of nonzero primes of $\cO_K$; for $p \in M^0_K$, we write $\cO_{K,p}$ for the localization of $\cO_K$ at $p$ and $\kappa(p)$ for the residue field. We use $S$ to denote a finite set of places of $K$ that includes the infinite places, and $\cO_{K,S}$ for the ring of $S$-integers of $K$.

	For a finite extension $L/K$, we write $\Omega_{\cO_L/\cO_K}$ for the the module of K\"ahler differentials.

	
\subsection{Discriminants of fields}
	\label{Sec:discriminants-fields}

	For a finite extension $E/K$, following {\sc Vojta}, define the \defi{relative logarithmic discriminant} as
	\[
		d_K(E) = \frac{1}{[E:K]}\log |\Disc(\cO_E)| \, - \, \log |\Disc(\cO_K)|.
	\]
Noting that $(\Disc(\cO_K)) = N_{K/\QQ}\det \Omega_{\cO_K/\ZZ}$ as ideals, we have 
	\[
		d_K(E) = \frac{1}{[E:K]}\deg \Omega_{\cO_E/\cO_K};
	\] 
see \cite[Page 1106]{VojtaABC}. The right hand side can be decomposed into a sum of local contributions
	\[
		\deg \Omega_{\cO_E/\cO_K} =  \sum_{\frp\in M^0_E}\deg_\frp \Omega_{\cO_E/\cO_K} = \sum_{\frp\in M^0_E}\text{length}(\Omega_{\cO_{E_p}/\cO_{K_p}})\log |\kappa(\frp)|.
	\] 
For $p\in M^0_K$ we write 
	\[
		d_K(E)_p :=  \frac{1}{[E:K]}  \sum_{\frp\mid p}\deg_\frp \Omega_{\cO_E/\cO_K}
	\]
for the contribution of the primes above $p$, so that $d_K(E) = \sum _{p\in M^0_K}  d_K(E)_p$. 

	If $L/E$ is a further finite extension, the formula for discriminants in the tower $L/E/K$ gives
	\begin{align}
		\label{Eq:discr-global} d_K(L) &=\frac{1}{[E:K]} d_E(L) + d_K(E) \ \ \, =  \frac{1}{[L:K]} \deg\Omega_{\cO_L/\cO_E} + d_K(E),\\
		\label{Eq:discr-local} d_K(L)_p &=\frac{1}{[E:K]} d_E(L)_p + d_K(E)_p =  \frac{1}{[L:K]} \deg \Omega_{\cO_{L,p}/\cO_{E,p}} + d_K(E)_p.
	\end{align}
In particular, if $L/E$ is unramified above $p \in M^0_K$ then $d_K(L)_p  = d_K(E)_p.$ 


\subsection{Discriminants of stacks}
	\label{Sec:discriminants}

	We shall need analogous definitions where $\Spec \cO_{E}$ is replaced by a normal separated Deligne--Mumford stack $\cT$ with coarse moduli scheme $\Spec \cO_{E}$: 
	\begin{align*}
		d_K(\cT) &= \frac{1}{\deg (\cT/\cO_K)} \deg(\Omega_\cT) \, - \,  \log |\Disc(\cO_K)| = \frac{1}{\deg (\cT/\cO_K)} \deg(\Omega_{\cT/\Spec \cO_K}), \\
		d_K(\cT)_p &= \frac{1}{\deg (\cT/\cO_K)} \deg(\Omega_{\cT_p}) \, - \, \log |\Disc(\cO_K)| = \frac{1}{\deg (\cT/ \cO_{K})}\deg(\Omega_{\cT_p/\Spec \cO_{K,p}}).
	\end{align*} 
The quantity $\deg (\cT/ \cO_K)$ is in general rational, as the fiber over $\Spec K$ might be a gerbe over $\Spec E$\footnote{We can redefine $\cT_x$ to be the normalization in the field $E$, so that $\deg(\cT/\cO_K) = [E:K]$, an integer.}. However, we still have $d_K(\cT) = \sum_{p\in M^0_K} d_K(\cT)_p$.

	Choose a morphism $\Spec \cO_F \to \cT$  unramified above $p$, so that $(\Omega_{\cT/\Spec \cO_K})_{\Spec \cO_{F,p}} =  \Omega_{\cO_{F,p}/\cO_{K,p}}.$ Since $[F:K] =  \deg (\cT/\cO_{K}) \cdot \deg (\Spec \cO_F/\cT)$, we can compute $d_K(\cT)_p $ entirely with schemes:

	\begin{lemma} 
		For $\Spec \cO_F \to \cT$  unramified above $p$, we have $d_K(\cT)_p  = d_K(F)_p.$
		\qed
	\end{lemma}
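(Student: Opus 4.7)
The plan is to reduce the stacky quantity $d_K(\cT)_p$ to the scheme-theoretic quantity $d_K(F)_p$ by using the compatibility of the degree of a zero-dimensional sheaf under \'etale pullback. First, I would observe that the hypothesis ``unramified above $p$'' together with finiteness of $\Spec \cO_F \to \cT$ and flatness (which follows from $\cT$ being normal, hence Cohen--Macaulay, of dimension one, and $\Spec \cO_F$ being of the same dimension) means that the morphism $f \colon \Spec \cO_F \to \cT$ is in fact \'etale over $p$. Consequently, the conormal exact sequence gives $\Omega_{\cO_{F,p}/\cT} = 0$, so the hypothesis $f^*\Omega_{\cT/\Spec \cO_K}|_{\Spec \cO_{F,p}} = \Omega_{\cO_{F,p}/\cO_{K,p}}$ stated just above the lemma is exactly the pullback statement for the sheaf of relative differentials.

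Next, I would record the basic fact that if $g \colon U \to \cX$ is a finite \'etale morphism of degree $n$ onto a Deligne--Mumford stack, and $\cF$ is a zero-dimensional coherent sheaf on $\cX$, then
\[
  \deg(g^*\cF) \ = \ n \cdot \deg(\cF).
\]
This is the defining compatibility that makes degree a well-defined rational number on $\cX$: one can check it on an \'etale atlas for $\cX$, where it reduces to the standard formula that the length of a finite module multiplies by the degree under a finite \'etale extension of Dedekind domains, combined with the fact that the residue fields of preimage primes are computed by the usual $\sum e_i f_i = n$ identity with all $e_i = 1$.

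Step three is to apply this compatibility to $\cF = \Omega_{\cT_p/\Spec \cO_{K,p}}$ with $n = \deg(\Spec \cO_F/\cT)$, obtaining
\[
  \deg(\Omega_{\cO_{F,p}/\cO_{K,p}}) \ = \ \deg(\Spec \cO_F/\cT)\cdot \deg(\Omega_{\cT_p/\Spec \cO_{K,p}}).
\]
Dividing by $[F:K] = \deg(\cT/\cO_K) \cdot \deg(\Spec \cO_F/\cT)$ and invoking the definitions of $d_K(F)_p$ and $d_K(\cT)_p$ then yields the equality $d_K(\cT)_p = d_K(F)_p$ at once.

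The only real point to watch out for is the flatness of $f$, which is needed both to justify that ``unramified'' upgrades to ``\'etale'' and to make the pullback formula for degrees valid; once that is in place the argument is a purely formal bookkeeping of degrees and lengths. I would expect the proof to take only a few lines in the final write-up, essentially just the display chain
\[
  d_K(\cT)_p \ = \ \frac{\deg(\Omega_{\cT_p/\Spec \cO_{K,p}})}{\deg(\cT/\cO_K)} \ = \ \frac{\deg(\Omega_{\cO_{F,p}/\cO_{K,p}})}{\deg(\cT/\cO_K)\cdot \deg(\Spec \cO_F/\cT)} \ = \ \frac{\deg(\Omega_{\cO_{F,p}/\cO_{K,p}})}{[F:K]} \ = \ d_K(F)_p.
\]
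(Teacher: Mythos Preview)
Your proposal is correct and follows essentially the same approach as the paper. The paper in fact gives no proof at all (the lemma is marked with an immediate \qed), treating the statement as an instant consequence of the two observations recorded just before it: that $(\Omega_{\cT/\Spec \cO_K})|_{\Spec \cO_{F,p}} = \Omega_{\cO_{F,p}/\cO_{K,p}}$ and that $[F:K] = \deg(\cT/\cO_K)\cdot\deg(\Spec\cO_F/\cT)$; your argument simply spells out the degree-under-\'etale-pullback bookkeeping that the paper leaves implicit.
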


	We deduce  analogues of  Equations \eqref{Eq:discr-global}  and \eqref{Eq:discr-local}:

	\begin{lemma}
		\label{Lem:discriminant}
		Let $L/E$ be a finite extension field and $\pi\colon\Spec \cO_L \to \cT$ a morphism. Then 
		\begin{align*}
			d_K(L)_p &= \frac{1}{[L:K]} \deg (\Omega_{\Spec (\cO_{L,p}) /\cT_p}) + d_K(\cT)_p\\ 				\intertext{and}
			d_K(L) &= \frac{1}{[L:K]} \deg (\Omega_{\Spec (\cO_{L}) /\cT}) + d_K(\cT).
		\end{align*}
	\end{lemma}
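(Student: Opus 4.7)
The plan is to reduce both identities to the classical tower formula for number fields already recalled in \eqref{Eq:discr-global}--\eqref{Eq:discr-local}. It will suffice to establish the first (local) identity, since summing it over $p \in M^0_K$ produces the second: the degree of a coherent sheaf on the one-dimensional scheme $\Spec \cO_L$, as well as each of $d_K(L)$ and $d_K(\cT)$, decomposes as a sum of local contributions over primes of $\cO_K$.

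Fix $p \in M^0_K$. Since $\cT$ is Deligne--Mumford, I will choose an étale surjective cover by a scheme and pass to the normalization and a connected component to produce a finite morphism $g \colon \Spec \cO_F \to \cT$, with $F/K$ a number field, that is étale above $p$. The previous lemma then gives $d_K(\cT)_p = d_K(F)_p$, reducing the right-hand side of the desired identity to a scheme-theoretic expression.

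Next, I will form the fiber product $\Spec \cO_L \times_\cT \Spec \cO_F$; because $g$ is representable and étale above $p$, this is an algebraic space étale over $\Spec \cO_L$ above $p$. Selecting a connected component through a prime over $p$ yields a finite extension $L'/L$ unramified at $p$ (so that $d_K(L')_p = d_K(L)_p$), together with a factorization
\[
	\Spec \cO_{L', p} \xrightarrow{\ f\ } \Spec \cO_{F,p} \xrightarrow{\ g\ } \cT_p
\]
of the composite $\Spec \cO_{L', p} \to \Spec \cO_{L,p} \to \cT_p$. The classical tower formula \eqref{Eq:discr-local} applied to $L'/F/K$, combined with $d_K(F)_p = d_K(\cT)_p$, gives
\[
	d_K(L)_p \;=\; \frac{1}{[L':K]}\deg \Omega_{\cO_{L',p}/\cO_{F,p}} \,+\, d_K(\cT)_p.
\]
Using étaleness of $g$ at $p$, I will identify $\Omega_{\cO_{L',p}/\cO_{F,p}}$ with $\Omega_{\cO_{L',p}/\cT_p}$ via the isomorphism $g^*\Omega_{\cT_p/\cO_{K,p}} \simeq \Omega_{\cO_{F,p}/\cO_{K,p}}$ and the cotangent sequences for the two factorizations. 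Flat base change along the étale morphism $f$ of degree $[L':L]$ then yields
\[
	\deg \Omega_{\cO_{L',p}/\cT_p} \,=\, [L':L]\cdot \deg \Omega_{\cO_{L,p}/\cT_p}.
\]
Since $[L':L]/[L':K] = 1/[L:K]$, the first identity of the lemma drops out.

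The main technical point I expect to need to verify carefully is the existence of the étale chart $\Spec \cO_F \to \cT$ above $p$ (standard from the Deligne--Mumford property, but requiring some attention when passing to components of a normalization) together with the coherent-sheaf bookkeeping under étale base change, so that the combinatorial factors cancel as advertised. Everything else is a formal manipulation of the tower formula already available in the number-field setting.
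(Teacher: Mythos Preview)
Your argument is correct and follows essentially the paper's route: choose $\Spec \cO_F \to \cT$ unramified above $p$, form the fiber product with $\Spec \cO_L$ over $\cT$, and reduce to the number-field tower formula via base-change identities for $\Omega$ and the projection formula. The only cosmetic difference is that the paper works with the whole fiber product $\cU=\Spec\cO_F\times_\cT\Spec\cO_L$ directly, whereas you pass to a single component $\Spec \cO_{L'}$; note also a small slip of the pen---the ``\'etale morphism of degree $[L':L]$'' in your final step is the projection $\Spec \cO_{L',p}\to\Spec \cO_{L,p}$, not the map you earlier named~$f$.
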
	

	\begin{proof}
		To prove the local statement, choose $\psi\colon\Spec \cO_F \to \cT$  unramified above $p$, and let $\cU = \Spec (\cO_{F}) \times_\cT \Spec (\cO_{L})$ with projection $\phi\colon \cU \to  \Spec (\cO_{L})$. These objects fit together in the  commutative diagram
		\[
			\xymatrix{\cU \ar[r]^{\phi} \ar[d] & \Spec \cO_L \ar[d]^\pi & \\
			\Spec \cO_F \ar[r]^\psi & \cT \ar[r]^\tau & \Spec\cO_K}
		\]
		We have 
		\begin{align*}
			\Omega_{\cU_p/ \Spec (\cO_{F,p})} &=  \Omega_{\cU_p/ \cT_p} = \phi^* \Omega_{\Spec (\cO_{L,p}) /\cT_p}, \\
			\Omega_{\cU_p/ \Spec (\cO_{K,p})} &=  \phi^* \Omega_{\cO_{L,p} / \cO_{K,p}}, 
		\end{align*} 
and 
		\[
			\Omega_{\cO_{F,p} / \cO_{K,p}} =  \psi^*\Omega_{\cT_p / \Spec (\cO_{K,p})}
		\]
The projection formula gives
		\begin{align*} 
			\deg (\Omega_{\cU_p/ \cT_p}) &= \deg(\psi)  \deg  (\Omega_{\Spec (\cO_{L,p}) /\cT_p})\\
			\deg (\Omega_{ \cO_{F,p} / \cO_{K,p}})  &= \deg(\psi) \deg (\Omega_{\cT_p /\Spec (\cO_{K,p})})
		\end{align*}
and finally 
		\begin{align*} 
			d_K(L)_p 
 			&= \frac{1}{[L:K]} \deg (\Omega_{ \cO_{L,p} /  \cO_{K,p}}) 
 			= \frac{1}{[L:K] \deg \psi} \deg (\Omega_{\cU_p / \Spec (\cO_{K,p})}) \\ 
 			&= \frac{1}{[L:K] \deg \psi} \left( \deg (\Omega_{\cU_p / \Spec (\cO_{F,p})}) +  (\deg \pi) \deg 			(\Omega_{ \cO_{F,p} /  \cO_{K,p}})\right) \\
			&= \frac{1}{[L:K] \deg \psi}  \deg (\Omega_{\cU_p / \cT_p}) + \frac{1}{  \deg(\cT/\Spec \cO_K)} \deg (\Omega_{\cT_p / \Spec (\cO_{K,p})}) \\
 			& = \frac{1}{[L:K]} \deg (\Omega_{\Spec (\cO_{L,p}) /\cT_p}) + d_K(\cT)_p,
 		\end{align*}
as required. The global formula follows by summing over $p\in M^0_K$.
	\end{proof}


\subsection{Heights on stacks}

	For a divisor $H$ on a smooth projective scheme $Y$, we denote by $h_H(x)$ the Weil height of $x$ with respect to $H$, which is well-defined up to a bounded function on $Y(\overline{K})$. To define a notion of height on a Deligne-Mumford stack, we pull back to a cover by a scheme and work there instead.  Let $X/K$ be a smooth proper Deligne-Mumford stack with projective coarse moduli scheme and let $H \subset X$ be a divisor. Let $f\colon Y\to X$ be the finite flat surjective morphism from a smooth projective scheme $Y$ guaranteed by~\cite[Theorem~1]{KreschVistoli} or~Proposition~\ref{lem:KV} below.  For a point $x \in X(\overline{K})$, let $y \in Y(\overline{K})$ be a point over $x$, and define
\[
h_H(x) := h_{f^*(H)}(y).
\]
This definition has the advantage of having functoriality properties of heights built into it. It is also compatible with passing to the coarse moduli space, at the price of working with $\QQ$-Cartier divisors on slightly singular schemes: any divisor $H$ on $X$ is the pullback of a $\QQ$-Cartier divisor $\uH$ on the coarse moduli space ${\underline X}$, and if $\underline x \in \uX$ is the image of $x$ then 
\[
h_H(x) = h_{\underline H}({\underline x}).
\]  

	Our definition has the disadvantage that there can be infinitely many rational points (with the same image in $\uX$) with the same height.  In a forthcoming paper, {\sc Ellenberg, Satriano} and {\sc Zureick-Brown} construct an alternative notion of height on a stack, with the property that there are only finitely many non-isomorphic points with bounded height.


\subsection{Normal crossings models}
	\label{ss:ncms}

	Let $(\sX,\sD)$ be a pair with $\sX \to \Spec \cO_{K,S}$ a smooth proper morphism from a scheme or Deligne-Mumford stack, and $\sD$ a fiber-wise normal crossings divisor on $\sX$. Let $(X,D)$ be the generic fiber of the pair $(\sX,\sD)$; we say that $(\sX,\sD)$ is a \defi{normal crossings model} of the pair $(X,D)$. Write $\sD = \sum_i \sD_i$ and let $D_i$ be the generic fiber of $\sD_i$. 
	

\subsection{Intersection multiplicities on schemes and stacks}

	For $R$ an integral extension of $\cO_{K,S}$, and $q \subset R$ a nonzero prime ideal, let $R_q$ be the localization of $R$ at $q$, with maximal ideal $\frm_q$ and residue field $\kappa(q)$. 

	We first define multiplicities for integral points. Let $x \in \sX(R_q)$, and define $n_q(\sD_i,x)$ as the intersection multiplicity of $x$ and $\sD_i$. In other words, letting $I_{\sD_i}$ denote the ideal of $\sD_i$, we have an equality of ideals in $R_q$
	\[
		I_{\sD_i}\big|_x = \frm_q^{n_q(\sD_i,x)}.
	\]
Note that if $R'$ is an integral extension of $R$, with maximal ideal $\frq\mid q$ and if $y\in \sX(R'_\frq)$ is the composite of $\Spec R'_\frq \to \Spec R_q \to \sX$, then we have 
$	n_\frq(\sD_i,y) = e(\frq\mid q) n_q(\sD_i,x)$, where $e(\frq\mid q) $ is the ramification index of $\frq$ over $q$. 

	This observation prompts the following extension of the definition of $n_q(\sD_i,x)$ to a rational point $x$ of  $\sX$.
Denoting by $K(R)$ and $K(R')$ the respective fraction fields of $R$ and $R'$, if $x \in \sX(K(R))$ and if $y\in \sX(R')$ is an integral point over $x$, then the quantity 
	\begin{equation}
		\label{Eq:n_q-extension} 
		 n_q(\sD_i,x) := 	\frac{1}{e(\frq\mid q)} n_\frq(\sD_i,y)
	\end{equation}
is well-defined.	 

	Finally, define $n_q(\sum a_i\sD_i,x) := \sum_i a_in_q(\sD_i,x)$.


\subsection{Counting functions}

	Following {\sc Vojta}~\cite[p.~1106]{VojtaABC}, for $x \in \sX(\overline K)$, with residue field $K(x)$, define the \defi{truncated counting function}
	\begin{equation}
		\label{eq:N1}
		N^{(1)}_{K}(D,x) = \frac{1}{[K(x):K]}\sum_{\substack{q \in \Spec\cO_{K(x),S} \\ n_q(\sD,x)>0}} 
		\log |\kappa(q)|.
	\end{equation}
The quantity on the right hand side of~\eqref{eq:N1} depends on the model $(\sX,\sD)$ and the finite set $S$ only up to  a bounded function on $X(\overline{K})$. However, we are interested in this quantity only up to such functions. Hence, the notation $N^{(1)}_{K}(D,x)$ does not reflect the model $(\sX,\sD)$ or the finite set $S$.

	By~\cite[p.~1113]{VojtaABC} or~\cite[Theorem~B.8.1(e)]{HindrySilverman} we have the bound

	\begin{equation}
		\label{Eq:counting-height}
		N^{(1)}_{K}(D,x) \leq 
		 \frac{1}{[K(x):K]}\sum_{\frq} n_\frq(D,x) \log |\kappa(\frq)| \leq 
		h_{D}(x) + O(1) 
	\end{equation}
which can be further improved whenever we bound the multiplicities $n_\frq(D,x)$ from below. 


\subsection{Coverings of stacks}


	We require the following version of~\cite[Theorem~1]{KreschVistoli}, due to {\sc Kresch} and {\sc Vistoli}, adapted to the case of a stack with a normal crossings divisor.

	\begin{proposition}
		\label{lem:KV}
		Suppose  $X/K$ is a smooth proper Deligne--Mumford stack with projective moduli scheme, with a normal crossings divisor $D\subset X$.  Then there exists a finite surjective morphism $\pi\colon Y \to X$ such that $Y$ is a smooth projective irreducible scheme, $D_Y := \pi^*D \subset Y$ is a normal crossings divisor, and the ramification divisor $R$ of $Y \to X$ meets every stratum of $D_Y$ properly.
	\end{proposition}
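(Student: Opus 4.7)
The plan is to start from the Kresch--Vistoli cover of $X$ and repair the pullback of $D$ by an embedded resolution of singularities, then to patch the loss of finiteness caused by that resolution.

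First, I would apply \cite[Theorem~1]{KreschVistoli} to obtain a finite surjective morphism $\pi_0\colon Y_0\to X$ with $Y_0$ a smooth projective scheme; after passing to an irreducible component I may assume $Y_0$ is irreducible. Since $\pi_0$ is a finite flat morphism between smooth targets, the divisor $D_0:=\pi_0^*D$ is Cartier and the ramification locus is itself a Cartier divisor $R_0\subset Y_0$. A priori the divisor $D_0+R_0$ will fail to have simple normal crossings and $R_0$ need not meet the strata of $D_0$ properly.

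Next, I would invoke Hironaka's embedded resolution of singularities: there is a composition of blow-ups $\sigma\colon Y\to Y_0$ along smooth centers contained in the singular locus $\Sigma$ of $D_0+R_0$ such that $(Y,\,\sigma^*(D_0+R_0)+\mathrm{Exc}(\sigma))$ has simple normal crossings and $\sigma$ is an isomorphism outside $\Sigma$. Then $\pi:=\pi_0\circ\sigma\colon Y\to X$ has $\pi^*D$ a subdivisor of an SNC divisor, hence itself NCD, and its ramification divisor is contained in $\sigma^*R_0+\mathrm{Exc}(\sigma)$, so it meets every stratum of $\pi^*D$ properly.

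The main obstacle is that the blow-up $\sigma$ destroys the finiteness of $\pi_0$, whereas the proposition requires $\pi$ to be finite. The redeeming feature is that $\sigma$ is an isomorphism outside a codimension-$\geq 2$ closed subset, whose image in $X$ is also of codimension $\geq 2$. To restore finiteness I would replace the Kresch--Vistoli step by an application of Kresch--Vistoli to a suitable iterated root stack $X[\sqrt[m]{D_1},\dots,\sqrt[m]{D_n}]$ attached to the components of $D$: this root stack is smooth, finite over $X$, and its reduced inverse image of $D$ already behaves like an SNC boundary, so a Kresch--Vistoli cover chosen compatibly with the local charts of the root stack produces a $Y_0$ on which no Hironaka blow-ups are needed. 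The technical heart of the argument is to verify that such a compatible Kresch--Vistoli cover can be constructed so that both the SNC condition on $\pi^*D$ and the transversality of the ramification divisor to the stratification hold simultaneously --- equivalently, that the Kresch--Vistoli construction can be carried out relative to the normal crossings structure.
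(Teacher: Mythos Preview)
Your diagnosis is accurate up to the last paragraph: applying \cite[Theorem~1]{KreschVistoli} as a black box and then resolving by blow-ups does destroy finiteness, and there is no way to repair this after the fact. But the proposed fix via root stacks does not actually help. Passing to $X'=X[\sqrt[m]{D_1},\dots,\sqrt[m]{D_n}]$ gives you a smooth stack on which the reduced preimage $D'$ of $D$ is again a normal crossings divisor, but applying Kresch--Vistoli to $X'$ as a black box gives a cover $Y_0\to X'$ with no control whatsoever over $D'|_{Y_0}$: the problem of making the pullback of the boundary NCD and transverse to the ramification is exactly the same on $X'$ as it was on $X$. The root stack buys nothing here.

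Your final sentence, however, correctly names the real task: one must open up the Kresch--Vistoli construction itself and carry out the slicing relative to the normal crossings structure. This is precisely what the paper does. Recall that the Kresch--Vistoli cover is obtained by starting from a smooth quasi-projective $U\to\underline{X}$ of positive relative dimension and repeatedly intersecting with general hypersurfaces until the relative dimension drops to zero. The paper proves a slicing lemma (an adaptation of \cite[Lemma~1]{KreschVistoli}) showing that at each stage the hypersurface can be chosen general enough that every stratum $D_U^I$ of the pulled-back NCD is cut transversally and remains generically smooth over its image; iterating this yields a $Y\subset U$ finite over $\underline{X}$ with $D_Y$ NCD and with the ramification divisor meeting every stratum properly. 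No resolution step, and hence no loss of finiteness, is ever needed. Your proposal stops at the point where the actual argument begins.
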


	The proof of this proposition requires the following slicing lemma.

	\begin{lemma}[{See \cite[Lemma 1]{KreschVistoli}}]
		\label{lem:KVlem1}
		Let $f\colon U \to V$ be a morphism of quasi-projective varieties over an infinite field, with constant fiber dimension $r>0$; Let $D \subset V$ be a divisor. Assume $U$ is smooth and  $D_U = f^{-1} D$ is a simple normal crossings divisor. Let $U \subset \PP^N$ be a projective embedding. Denote by $D_U^I$ the closed strata of $(U,D_U)$, and assume further that $D_U^I \to D^I:= f(D_U^I)$ is generically smooth for each $I$. Then for sufficiently high $d$, the intersection $D_U^I \cap H^{(d)}$ of each stratum $D_U^I$ with a general hypersurface $H^{(d)}\subset\PP^N$ of degree $d$ is a smooth Cartier divisor in $D_U^I$, generically smooth and of constant fiber dimension $r-1$ over $D^I$.
	\end{lemma}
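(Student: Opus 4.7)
The plan is to argue stratum by stratum, adapting the Kresch--Vistoli argument of \cite[Lemma 1]{KreschVistoli} to the stratified setting. Fix $d$ to be chosen large, and work with the linear system $|\cO_{\PP^N}(d)|$. Since $U$ is smooth and $D_U$ is a simple normal crossings divisor, every closed stratum $D_U^I = \bigcap_{i\in I} D_i$ is itself smooth and quasi-projective. By classical Bertini applied to each $D_U^I \hookrightarrow \PP^N$, a general degree-$d$ hypersurface $H^{(d)}$ cuts $D_U^I$ in a smooth Cartier divisor, and since the collection of strata is finite, we can intersect these finitely many open dense conditions inside $|\cO_{\PP^N}(d)|$ to make this hold for every stratum simultaneously.

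For the generic smoothness of $D_U^I \cap H^{(d)}$ over $D^I$, I would use the hypothesis that $D_U^I \to D^I$ is already generically smooth. Let $D^{I,\circ} \subset D^I$ be its locus of smoothness and $D_U^{I,\circ}$ the preimage; then $D_U^{I,\circ} \to D^{I,\circ}$ is a smooth morphism of smooth varieties. The fiber-wise Bertini argument from \cite[Lemma 1]{KreschVistoli} then shows that, for $d \gg 0$, a general $H^{(d)}$ meets every fiber of $D_U^{I,\circ} \to D^{I,\circ}$ transversally; this yields smoothness of $D_U^I \cap H^{(d)} \to D^I$ over $D^{I,\circ}$, which is generic smoothness over $D^I$.

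The main obstacle is the constant fiber dimension claim: one must guarantee that no fiber of $D_U^I \to D^I$ of dimension $r$ is entirely swallowed by $H^{(d)}$, so that cutting by $H^{(d)}$ drops the fiber dimension by exactly one everywhere. I would handle this by a parameter count on the incidence correspondence
\[
\Sigma_I = \{(H,v) \in |\cO_{\PP^N}(d)| \times D^I : D_U^I \cap f^{-1}(v) \subseteq H\}.
\]
The fiber of $\Sigma_I$ over a point $v \in D^I$ is the linear subspace of degree-$d$ hypersurfaces containing the projective subvariety $D_U^I \cap f^{-1}(v) \subset \PP^N$; for subvarieties of dimension $r$ and bounded degree, this linear subspace has codimension growing like $d^r$ in $|\cO_{\PP^N}(d)|$, whereas $\dim D^I$ is fixed. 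Hence for $d \gg 0$ the codimension of each fiber exceeds $\dim D^I$, so $\Sigma_I$ does not dominate $|\cO_{\PP^N}(d)|$, and a general $H^{(d)}$ contains no such fiber. Intersecting the resulting finitely many open dense conditions across all strata $I$ produces a single dense open subset of $|\cO_{\PP^N}(d)|$ whose members satisfy all three conclusions.
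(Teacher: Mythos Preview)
Your approach is correct and follows the same stratum-by-stratum strategy as the paper. The paper's proof is more economical: it simply invokes \cite[Lemma~1]{KreschVistoli} as a black box applied to each morphism $D_U^I \to D^I$ (this already yields ``smooth Cartier divisor of constant fiber dimension $r-1$''), then appeals to Bertini once more for generic smoothness, and finally takes $d \geq \max_I d_I$. You instead unpack the ingredients, in particular reproving the constant-fiber-dimension part of the Kresch--Vistoli lemma via an incidence-correspondence parameter count rather than citing it.

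Two small imprecisions in your parameter count are worth noting. First, the claim that the fibers have ``bounded degree'' is not justified (no flatness or properness is assumed), and is not needed: for any positive-dimensional integral subvariety $Z \subset \PP^N$ one has the uniform bound that $Z$ imposes at least $d+1$ independent conditions on $|\cO(d)|$ (choose linear forms $s,t$ with no member of the pencil $\langle s,t\rangle$ containing $Z$; then $s^d,\dots,t^d$ restrict to independent sections), and $d+1 > \dim V$ suffices. Second, your $\Sigma_I$ should parametrize pairs $(H,v)$ for which $H$ contains some $r$-dimensional irreducible \emph{component} of the fiber over $v$, not the entire fiber; otherwise $H$ could contain one component while missing another, and the fiber dimension would fail to drop. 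Both are easy fixes and do not affect the overall argument.
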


	\begin{proof}[Proof of the Lemma]
		For each $I$, \cite[Lemma 1]{KreschVistoli} applied to $U \to V$ replaced by $D_U^I \to D^I$, provides an an integer $d_I$ and, for each $d\geq d_I$, an open subset of $\Gamma(\PP^N, \cO(d))$ where $D_U^I \cap H^{(d)}$ is a smooth Cartier divisor in $D_U^I$ of constant fiber dimension $r-1$ over $D^I$. By {\sc Bertini}'s Theorem, after possibly enlarging $d_I$ we may replace it by a smaller  open subset where $D_U^I \cap H^{(d)} \to D^I$ is also generically smooth. Take  $d\geq\max_I d_I$.  
	\end{proof}

	\begin{proof}[Proof of Proposition~\ref{lem:KV}]
		First, we note that $X$ is a quotient stack: to see this, one combines~\cite[Theorem~2]{KreschVistoli} together with a result of {\sc Gabber} implying that the Azumaya Brauer group of a quasi-projective scheme over a field coincides with the cohomological Brauer group~\cite{deJong-Gabber}. Next, proceeding as in the proof of~\cite[Theorem~1]{KreschVistoli}, one can construct a smooth projective morphism of stacks $\pi\colon P \to X$ with a representable open substack $Q\subset P$, whose fiber dimension is greater than that of $P\smallsetminus Q$. The induced morphism on coarse moduli spaces $U \to \underline{X}$ is proper, and $U$ is quasi-projective by~\cite[Lemma~2]{KreschVistoli}.

		Beginning with the map $U \to \underline{X}$ and the image of $D$ via $X \to \underline{X}$, repeated applications of Lemma~\ref{lem:KVlem1} yield a closed subscheme $Y\subset U$ such that the map $Y \to \underline{X}$ is finite and surjective, and such that $D_Y$ is a normal crossings divisor whose strata meet the ramification divisor of $Y \to \underline{X}$ properly. We can assume that $Y$ is disjoint from the image of $P\smallsetminus Q$ in $U$, by dimension reasons. We can thus lift $Y$ to a representable substack of $Q$, because $Q$ is representable, and get the desired morphism $Y \to X$. 
	\end{proof}


\subsection{Rational and integral points on stacks}

	We will make use of the following standard observations:

	\begin{lemma} 
		\label{Lem:lifting-points}
		 Let $R$ be a dedekind domain with fraction field $K$.
		 \begin{enumerate}
			\item Let $f\colon Y \to X$ be a proper representable morphism of algebraic stacks over $R$. Let $y\in Y(K)$ and $x = f(y)$. Then $y$ extends to a point  $\eta \in Y(R)$ if and only if $x$ extends to a point  $\xi \in X(R)$. 
			\item Let $X/R$ be an algebraic  stack, $Y/R$ a proper scheme, and $Y \to X$ a morphism.  If $x\in X(K)$ is the image of  $y\in Y(K)$ then it extends to $\xi\in X(R)$. 
			\item  Let $X/R$ be a proper algebraic  stack, $Y/R$ a proper scheme, and $Y \to X$ a flat surjective morphism of degree $M$. Let $x \in X(K)$. There is a finite extension $L/K$, with $[L:K]\leq M$ and $R_L\subset L$ the integral closure of $R$, and a point $\xi \in X(R_L)$ lifting $x$.
		\end{enumerate}
	\end{lemma}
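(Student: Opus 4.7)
The plan is to use the valuative criterion of properness over a Dedekind base in three slightly different ways, one for each part. For part~(1), the direction giving $\xi$ from $\eta$ is immediate by composition, so the substance is the converse. Given $\xi\in X(R)$, I form the base change $Z := \Spec R \times_{\xi, X} Y$. Since $f$ is representable and proper, $Z$ is an algebraic space proper over $\Spec R$, and the compatibility between $y$ and $\xi|_{\Spec K} = x$ produces a $K$-valued section of $Z\to \Spec R$ above the generic point. Extending this section to an $R$-valued section is the content of the valuative criterion of properness. Because $R$ is Dedekind rather than a DVR, the extension is obtained either by applying the valuative criterion at each height-one prime and gluing the (unique) local extensions, or more cleanly by taking the scheme-theoretic closure of the $K$-section in $Z$: this closure is finite and birational over $\Spec R$, hence an isomorphism by normality of $R$. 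Composing the resulting section with $Z\to Y$ yields $\eta$.

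For part~(2), no base change is needed. Since $Y\to\Spec R$ is already a proper morphism of schemes, the valuative criterion applied directly gives an extension $\eta\in Y(R)$ of $y$, and $\xi := f\circ\eta$ is the desired $R$-point of $X$ lifting $x$.

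For part~(3), I first descend to $Y$ over a controlled extension of $K$, and then invoke part~(2). Since $Y\to X$ is finite flat of rank $M$ (and hence automatically representable as a morphism from a scheme to a stack), the fibre $Y_K := Y\times_X\Spec K$ is a finite $K$-scheme of rank exactly $M$; it is therefore a disjoint union of spectra of finite field extensions of $K$ whose degrees sum to $M$. Choosing any irreducible component $\Spec L\hookrightarrow Y_K$ gives $[L:K]\leq M$ and a point $y\in Y(L)$ mapping to $x_L\in X(L)$. Letting $R_L\subset L$ be the integral closure of $R$ in $L$, which is again a Dedekind domain, part~(2) applied over $R_L$ furnishes $\xi\in X(R_L)$ extending $x$.

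The one step requiring genuine care is the gluing in part~(1): moving from a DVR base to a general Dedekind base is what keeps the valuative criterion from being fully automatic there, though normality of $R$ resolves it. Everything else amounts to routine bookkeeping around the valuative criterion together with the observation that integral closures of Dedekind domains in finite extensions remain Dedekind.
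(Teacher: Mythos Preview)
Your proposal is correct and follows essentially the same route as the paper: part~(1) via the fibered product $Z=\Spec R\times_X Y$ and the valuative criterion, part~(2) by extending $y$ directly and composing, and part~(3) by pulling back along $x$ to get a finite $K$-scheme of degree $M$, choosing a residue field $L$, and invoking part~(2) over $R_L$. You supply more detail than the paper does (the Dedekind-versus-DVR gluing, the decomposition of the fiber), but the arguments are the same.
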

	
	\begin{proof} \ 
		\begin{enumerate}
			\item Given $\eta \in Y(R)$ we have $f(\eta) = \xi \in X(R)$. If $\xi \in X(R)$ consider the fibered product $Z = \Spec R \times_X Y$ defined by $\xi$, which is representable and proper over $R$. Then $y$ gives a point of $Z(K)$, which extends to $R$ by the valuative criterion of properness.
			\item By the valuative criterion for properness $y$ extends to $\eta\in Y(R)$, whose composition with $Y \to X$ gives $\xi\in X(R)$.
			\item The  $K$-scheme $Z = \Spec K \times_XY$ is finite  of degree $M$, hence admits a rational point $y\in Z(L)$ with $[L:K]\leq M$. The composition $\Spec L \to Z \to Y$ extends to $\eta \in Y(R_L)$ by the valuative criterion for properness, and its composition with $f$ is a point $\xi \in X(R_L)$ lifting $x$.
		\end{enumerate}
	\end{proof}
	

\section{Vojta's conjecture for varieties and stacks}
	\addtocontents{toc}{\hspace{0.24in} We recall {\sc Vojta}'s conjecture (Conjecture \ref{conj:Vojta}) and show that it implies 

	\hspace{0.26in} a version for algebraic stacks (Proposition \ref{prop:VojtaToVojtaDM}).}

	We write $K_X$ for the canonical divisor class of a smooth variety or smooth Deligne--Mumford stack $X$.

	\begin{conjecture}[{\sc Vojta} {\cite[Conjecture 2.3]{VojtaABC}}]
		\label{conj:Vojta}
		Let $X$ be a smooth projective variety over a number field $K$, $D$ a normal crossings divisor on $X$, and $H$ a big line bundle on $X$.  Let $r$ be a positive integer and fix $\delta>0$. Then there is a proper Zariski-closed subset $Z \subset X$ containing $D$ such that
		\[
			N^{(1)}_{K}(D,x) + d_K(K(x))\ \  \geq \ \  h_{K_X(D)}(x)- \delta h_H(x) - O(1)
		\]
for all $x\in X(\overline{K})\smallsetminus Z(\overline{K})$ with $[K(x):K]\leq r$.
	\end{conjecture}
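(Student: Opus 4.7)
The final statement is Vojta's conjecture itself, a central open problem in Diophantine geometry with no known proof in any dimension greater than one; the authors take it as an input hypothesis rather than a theorem, so a genuine proof plan is beyond the current state of the art. I will instead describe the heuristic strategies that have been pursued and explain where each falls short.

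The central approach, initiated by Vojta in his thesis, is the formal analogy with Nevanlinna theory. In the analytic setting, the Second Main Theorem for a non-constant holomorphic map $f\colon \CC \to X$ yields
\[
N^{(1)}(f,D) + N_{\textrm{ram}}(f) \,\geq\, T_{K_X(D)}(f,r) - \delta\, T_H(f,r) + O(1)
\]
exceptionally in $r$, and one hopes to translate the proof (which rests on the logarithmic derivative lemma and Ahlfors's theory of covering surfaces) into an arithmetic estimate via the dictionary that converts characteristic functions into Weil heights, counting functions into the $N^{(1)}_K$ of this paper, and the ramification term into $d_K(K(x))$. A plan along these lines would proceed in three stages: (i)~establish the one-dimensional case, which already encompasses the $abc$ conjecture for number fields; (ii)~pass from curves to higher-dimensional $X$ by slicing and covering arguments exploiting the birational invariance of $K_X(D)$; and (iii)~accommodate points of degree $\leq r$ by a Weil-restriction-type reduction together with control of the discriminant contribution via geometry-of-numbers arguments.

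The decisive obstacle is stage (i): no arithmetic analogue of the logarithmic derivative lemma is known, and without it even the curve case (equivalent to $abc$ for number fields) remains entirely open. Alternative avenues --- an arithmetic Bogomolov--Miyaoka--Yau inequality \`a la Parshin and Moret-Bailly, or $p$-adic Hodge-theoretic methods in the spirit of Faltings's proof of the Mordell conjecture --- yield partial results in special geometric settings (fibered surfaces, Shimura varieties with sufficient Hodge cycles, function-field analogues) but have never produced the required universal height inequality. Any honest proposal therefore reduces to finding an arithmetic Schwarz lemma; failing that, Conjecture~\ref{conj:Vojta} must remain, as it does throughout this paper, a working assumption rather than a theorem, and the genuine content of the present work lies in the consequences extracted in Proposition~\ref{prop:VojtaToVojtaDM} and Theorems~\ref{thm:main} and~\ref{thm:main2}.
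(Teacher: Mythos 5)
You have correctly identified that this statement is Vojta's conjecture itself, which the paper does not prove but rather cites from \cite{VojtaABC} and assumes as a hypothesis for Theorems~\ref{thm:main} and~\ref{thm:main2}; there is no proof of it in the paper to compare against. Your summary of the Nevanlinna-theoretic heuristic and the status of the problem is accurate and appropriately honest about the absence of any known argument.
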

	
	We note that variants of the conjecture above have been stated, involving the counting function $N_{K}(D,x)= \frac{1}{[K(x):K]}\sum_{\frq} n_\frq(D,x) \log |\kappa(\frq)|$ and a different coefficient in front of the discriminant term $d_K(K(x))$. It may be possible to deduce results similar to Theorem \ref{thm:main} from these variants; we do not do so here.
	
	We shall need a version of {\sc Vojta}'s conjecture for Deligne-Mumford stacks.  For a smooth proper Deligne-Mumford stack $\sX \to \Spec \cO_{K,S}$ we write $X = \sX_K$ for the generic fiber, which we assume is irreducible, and $\underline{X}$ for the coarse moduli space of $X$. Similarly, for a normal crossings divisor $\sD$ of $\sX$, we write $D$ for its generic fiber.

	Given a  point $x \in \sX(\overline K)$, we take the Zariski closure and normalization of its image, and extend it uniquely to a morphism, denoted $\cT_x \to \sX$, where $\cT_x$ is a normal stack with coarse moduli scheme $\Spec \cO_{K(x),S}$. We thus have the relative discriminant $d_K(\cT_x)$ defined in \S\ref{Sec:discriminants}.

	\begin{proposition}[{\sc Vojta} for stacks]
		\label{prop:VojtaToVojtaDM}
		Assume {\sc Vojta}'s Conjecture~\ref{conj:Vojta} holds. 
Let $\sX\to \Spec \cO_{K,S}$, $X$, $\underline{X}$, and $D$ be as above. Suppose that $\underline{X}$ is projective, and let $H$ be a big line bundle on it. Let $r$ be a positive integer and fix $\delta>0$. Then there is a proper Zariski-closed subset $Z \subset X$ containing $D$ such that 
		\[
			N^{(1)}_{K}(D,x) + d_K(\cT_x)\ \  \geq \ \ h_{K_X(D)}(x)- \delta h_H(x) - O(1)
		\]
for all $x\in X(\overline K)\smallsetminus Z(\overline{K})$ with $[K(x):K]\leq r$.
	\end{proposition}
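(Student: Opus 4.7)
The plan is to deduce the stacky statement from Conjecture \ref{conj:Vojta} by pulling back along the ramified covering $\pi\colon Y\to X$ produced by Proposition \ref{lem:KV}. Choose such a $\pi$, with $Y$ a smooth projective scheme, $D_Y=\pi^{-1}D$ a normal crossings divisor, and ramification divisor $R$ meeting every stratum of $D_Y$ properly. After enlarging $S$ we obtain an integral model $\pi\colon\sY\to\sX$ over $\Spec\cO_{K,S}$. For $x\in X(\overline K)$ with $[K(x):K]\le r$, Lemma \ref{Lem:lifting-points}(3) produces a lift $y\in Y(\overline K)$ with $[K(y):K]\le r\deg(\pi)$.

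\textbf{Applying Vojta on $Y$.} Since $\pi$ is finite and surjective, $\pi^*H$ is big on $Y$; apply Conjecture \ref{conj:Vojta} to $(Y,D_Y,\pi^*H)$ with degree bound $r\deg(\pi)$ and the same $\delta$, obtaining a Zariski-closed $Z_Y\subsetneq Y$ containing $D_Y$ with
\[
N^{(1)}_K(D_Y,y)+d_K(K(y))\ \geq\ h_{K_Y+D_Y}(y)-\delta\, h_{\pi^*H}(y)-O(1)
\]
for all $y\notin Z_Y(\overline K)$ of degree $\le r\deg(\pi)$. I then compare each term against its stacky analogue at $x$. First, the proper intersection $R\cap D_Y$ forces $\pi$ to be unramified along $D_Y$, so $\pi^*D=D_Y$ as divisors and Riemann--Hurwitz yields $K_Y+D_Y=\pi^*(K_X+D)+R$, whence $h_{K_Y+D_Y}(y)=h_{K_X+D}(x)+h_R(y)+O(1)$ and $h_{\pi^*H}(y)=h_H(x)$ by the definition of stacky heights. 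Second, since $\pi$ is \'etale along $D_Y$, the primes $\frq$ of $K(y)$ where $y$ reduces into $D_Y$ are precisely those above primes $q$ of $K(x)$ where $x$ reduces into $D$; a residue-degree computation using $\sum_{\frq\mid q}f(\frq|q)\le [K(y):K(x)]$ gives $N^{(1)}_K(D_Y,y)\le N^{(1)}_K(D,x)$. Third, applying Lemma \ref{Lem:discriminant} to $\Spec\cO_{K(y),S}\to\cT_x$ yields
\[
d_K(K(y))-d_K(\cT_x)=\frac{1}{[K(y):K]}\deg\Omega_{T_y/\cT_x},
\]
and since $\Omega_{\sY/\sX}$ is supported on the different, whose horizontal part is $R$, pullback along $y$ gives $d_K(K(y))-d_K(\cT_x)\le h_R(y)+O(1)$, with the error absorbing wild/vertical corrections that are bounded for the fixed $\pi$ and fixed degree bound.

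\textbf{Combining and the exceptional set.} Chaining these four comparisons, the $h_R(y)$ contribution from Riemann--Hurwitz cancels exactly against the extra discriminant on the other side:
\[
N^{(1)}_K(D,x)+d_K(\cT_x)+h_R(y)+O(1)\ \geq\ h_{K_X+D}(x)+h_R(y)-\delta\, h_H(x)-O(1),
\]
which is the desired inequality. For the exceptional set I would take $Z:=\pi(Z_Y)$; since $\pi$ is finite, $\dim Z=\dim Z_Y<\dim Y=\dim X$, so $Z\subsetneq X$ is proper Zariski-closed and contains $D=\pi(D_Y)$. For $x\notin Z$, every lift $y\in\pi^{-1}(x)$ automatically lies outside $Z_Y$, so Vojta on $Y$ is available.

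\textbf{Main obstacle.} The genuinely technical step is the discriminant comparison in the third item: identifying $\Omega_{T_y/\cT_x}$ with the pullback of $\Omega_{\sY/\sX}$, relating the different of $\pi$ to the ramification divisor $R$ (a clean equality in the tame case but only up to bounded wild-ramification corrections in general), and ensuring these corrections are absorbed into $O(1)$ uniformly over $y$ of bounded degree. Enlarging $S$ to remove wildly ramified residue characteristics of $\pi$, and exploiting that ramification indices are bounded by $\deg(\pi)$, makes this manageable, but it is the one place where the stack-to-scheme reduction is not formal.
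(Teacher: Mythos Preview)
Your approach is essentially the paper's: pass to the Kresch--Vistoli cover $\pi\colon Y\to X$, apply Conjecture~\ref{conj:Vojta} on $Y$ with degree bound $r\cdot\deg\pi$, compare $N^{(1)}$, discriminants, and heights term by term, and cancel $h_R(y)$.

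The one place where you differ is the step you flag as the ``main obstacle.'' The paper handles the discriminant comparison without any tame/wild distinction and without enlarging $S$ to kill wild primes. One simply chains
\[
\deg\Omega_{\cO_{K(y)}/\cT_x}\ \le\ \deg_y\Omega_{\overline{\mathrm{Im}(y)}/\cT_x}\ \le\ \deg_y\Omega_{\sY_{\cT_x}/\cT_x}\ =\ \deg_y\Omega_{\sY/\sX},
\]
using that $\deg\Omega$ can only drop under normalization, restriction to a closed subscheme, and base change (here $\sY_{\cT_x}=\sY\times_\sX\cT_x$, and $\Spec\cO_{K(y)}\to\sY_{\cT_x}$ is the normalization of its image). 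Then $\deg_y\Omega_{\sY/\sX}=\deg_y\det\Omega_{\sY/\sX}=[K(y):K]\, h_R(y)+O(1)$, since $\det\Omega_{\sY/\sX}$ is a line bundle on $\sY$ whose generic fiber is $\cO_Y(R)$ by definition of $R$. Wild ramification never enters: you are not comparing a different to a ramification divisor abstractly, you are computing the degree of the pullback of a fixed coherent sheaf whose determinant \emph{is} $\cO(R)$. Your proposed fix (throwing the wildly ramified residue characteristics of $\pi$ into $S$) would also work, but is unnecessary.
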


	\begin{proof}
		Let $Y \to X$ be the finite cover of $X$ guaranteed by Proposition~\ref{lem:KV}. Possibly after enlarging $S$ we may assume $Y\to X$ extends to $\pi\colon\sY \to \sX$ for some model $\sY$ of $Y$, so  a point  $y\in Y(K(y))$ extends to $\Spec \cO_{K(y),S} \to \sY$, and composes to $\Spec \cO_{K(y),S} \to \sX$. We denote $\pi(y)=x$, and its extension as a stack by $\cT:= \cT_x \to \sX$.

		By Riemann-Hurwitz, we have
		\begin{align*} 
			K_Y+D_Y &= (\pi^* K_X + R) + \pi^* D = \pi^*(K_X+D) + R.  \\
			\intertext{  Thus for $y \in Y(\overline{K})$ with $\pi(y) = x$ outside a proper Zariski-closed subset of $Y$, we have}
			h_{K_Y(D_Y)}(y) &= h_{K_X(D)}(x) + h_R(y) + O(1).
			\intertext{Let $\underline{\pi}\colon Y \to \uX$ be the composition of $\pi$ with the natural map $X \to \uX$. Let $B = \underline{\pi}^*(H)$; then $B$ is big, and by functoriality of heights, we have} 
			h_{B}(y) &= h_{H}(x) + O(1).
		\end{align*} 
for all $y \in Y(\overline{K})$. Let $\sD_Y = \pi^*\sD$.

		\begin{lemma}
			\label{Lem:compare-N1}
			$N^{(1)}_K(D_Y,y) \leq N^{(1)}_K(D,x).$
		\end{lemma}

		\begin{proof}
			Note that $n_q(\sD_Y,y) >0$ if and only if $n_q(\sD,x) >0$. Then
			\begin{align*}
				N^{(1)}_K(D_Y,y) &=  \frac{1}{[K(y):K]}\sum_{\substack{\frq \in \Spec\cO_{K(y),S} \\ n_{\frq}(\sD_Y,y) >0}}  \log |\kappa(\frq)|.\\
				&= \frac{1}{[K(y):K]}\sum_{q \,:\, n_q(\sD,x)>0}\sum_{\frq \,\mid\, q} \log |\kappa(\frq)| \\
				&\leq \frac{1}{[K(y):K]}\sum_{q \,:\, n_q(\sD,x) >0}\sum_{\frq \,\mid\, q} e(\frq\mid q) \log |\kappa(\frq)| \\
				&= \frac{1}{[K(y):K]}\sum_{q \,:\, n_q(\sD,x) >0} [K(y) : K(x)] \log |\kappa(q)| \\
				&= \frac{1}{[K(x):K]}\sum_{q \,:\, n_q(\sD,x) >0} \log |\kappa(q)| \ \ =\ \  N^{(1)}_K(D,x)
			\end{align*}
		\end{proof}

		\begin{lemma} 
			\label{Lem:disc-height}
			$\frac{1}{[K(y):K]} \deg_y \Omega_{\cO_{K(y)}/\cO_{\cT}} \ \ \leq \ \  h_R(y) \ +  \ O(1)$.
		\end{lemma}

		\begin{proof}
			Write $\sY_\cT = \sY \times_\sX \cT$. The morphism $\cT \to \sX$ is representable since it is the normalization of a substack. It follows that $\sY_\cT$ is a scheme. Also $\Spec \cO_{K(y)} \to \sY_\cT$ is the normalization of the image subscheme $\overline{\text{Im}(y)}$.

			Therefore
			\begin{align*}
				\deg_y \Omega_{ \cO_{K(y)} /\cT} &\ \  \leq \ \ \deg_y \Omega_{\text{Im}(y) /\cT}\ \  \leq \ \ \deg_y \Omega_{ \sY_\cT/\cT}\ \  \leq \ \ \deg_y\Omega_{\sY/\sX} \\
				\intertext{(since $\deg \Omega$ drops when passing to normalization, subscheme or pullback)}
				&\ \ = \ \ \deg_y \det \Omega_{\sY/\sX} \ \  = \ \  [K(y):K]\ \cdot \ h_R(y)\ \ + \ \ O(1)
			\end{align*}
as needed.
		\end{proof}

		Continuing with the proof of Proposition~\ref{prop:VojtaToVojtaDM}, Conjecture~\ref{conj:Vojta} for $Y$ gives
		\begin{equation}
			\label{eq:vojtaapplied}
			N^{(1)}_K(D_Y,y) + d_K(K(y)) \ \ \geq\ \  h_{K_Y+D_Y}(y) - \delta   h_{B}(y) + O_{[K(y):K(x)]}(1).
		\end{equation}
for $y$ away from a proper closed subset.  By Lemma \ref{Lem:discriminant} we have $$d_K(K(y)) = \frac{1}{[K(y):K]} \deg_y \Omega_{\cO_{K(y)}/\cO_{\cT}} + d_K(\cT).$$ By Lemmas \ref{Lem:compare-N1} and \ref{Lem:disc-height} the left hand side of~\eqref{eq:vojtaapplied} is majorized by 
		\[
			N^{(1)}_K(D,\pi(y)) + h_R(y) + d_K(\cT)  + O_{[K(y):K(x)]}(1). 
		\]
On the other hand, for the right hand side of~\eqref{eq:vojtaapplied}, we have 
		\[
			h_{K_Y(D_Y)}(y) - \delta   h_{B}(y) = h_{K_X(D)}(x) + h_R(y)  - \delta h_{H}(x) + O(1).
		\]
All together, we obtain
		\[
			N^{(1)}_K(D,x) + h_R(y) + d_K(\cT) \geq h_{K_X(D)}(x) + h_R(y)  - \delta h_{H}(x) + O_{[K(y):K(x)]}(1),
		\]
which, after canceling $h_R(y)$, gives
		\[
			N^{(1)}_K(D,x) + d_K(\cT) \geq h_{K_X(D)}(x) - \delta h_{H}(x) + O_{[K(y):K(x)]}(1).
		\]
A point $x$ with $[K(x):K]\leq r$ is the image of a point $y$ with $[K(y):K]\leq r\cdot \deg\pi$. Thus, the Proposition for $\sX$, $\sD$, $H$, $r$ and $\delta$ follows from Conjecture~\ref{conj:Vojta} applied to $Y$, $\pi^*\sD$, $B$, $r\cdot \deg\pi$, and $\delta$.
	\end{proof}


\section{Proof of the main result}
	\addtocontents{toc}{\hspace{0.24in} We show that, for an abelian variety with full level-$p$ structure,  the discriminant 

	\hspace{0.26in} term $d_K(\cT_x)$ and  the truncated counting function $N_K^{(1)}(D,x)$ are majorized

	\hspace{0.26in}   by $h_{\epsilon D}(x)$ (Lemmas \ref{lem:primelevels} and \ref{lem:primelevels-truncated}). 
	We use this to prove Theorem \ref{thm:main}}


\subsection{Moduli spaces and toroidal compactifications}

	We follow the notation of~\cite{Alevels}. However, we work over $\Spec \ZZ$:
	
	\medskip
	
	\begin{tabular}{ll}
		$\tcA_g \subset \otcA_g$ & a toroidal compactification of the moduli \emph{stack} of \\
	& principally polarized abelian varieties of dimension $g$\\[1mm]
		$\cA_g \subset \ocA_g$ & the resulting compactification of the moduli \emph{space} of \\
	& principally polarized abelian varieties of dimension $g$\\[1mm]
		$\tcA_g^{[m]} \subset \otcA_g^{[m]}$ & a compatible toroidal compactification of the moduli \emph{stack} of \\
	& principally polarized abelian varieties of dimension $g$\\	
	& with full level-$m$ structure\\[1mm]
		$\cA_g^{[m]} \subset \ocA_g^{[m]}$ & the resulting compactification of the moduli space of \\
	& principally polarized abelian varieties of dimension $g$\\
	& with full level-$m$ structure\\[1mm]
	\end{tabular}
	
	\medskip
	
	The construction of $\otcA_g^{[m]}$ by {\sc Faltings} and {\sc Chai}~\cite[p.~128]{Faltings-Chai} yields a stack smooth over $\Spec \ZZ[\zeta_m,1/m]$, where $\zeta_m$ is a primitive $m$-th root of unity. Its boundary is a normal crossings divisor. However, their definition of full level-$m$ structure requires a symplectic isomorphism $A[m] \xrightarrow{\sim} (\ZZ/m\ZZ)^{2g}$. In~\cite[IV Remark~6.12]{Faltings-Chai}, they relax the requirement that the isomorphism be symplectic, giving a  stack smooth over $\Spec\ZZ[1/m]$; in~\cite[I Definition~1.8]{Faltings-Chai} they also consider full level structures in our sense (albeit still requiring the isomorphism~\eqref{eq:levelstructure} to be symplectic). Combining these remarks we obtain a stack  we denote $(\otcA_g^{[m]})_{\ZZ[1/m]}$, smooth over $\ZZ[1/m]$. If $m \geq 3$, this stack is a scheme~\cite[IV.6.9]{Faltings-Chai}.
	
	We extend the construction to $\Spec \ZZ$ by defining $\otcA_g^{[m]}$ to be the normalization of $\otcA_g$ in  $(\otcA_g^{[m]})_{\ZZ[1/m]}$. The resulting stack is not smooth over primes dividing $m$, and even the interior of the stack over such primes does not have a modular interpretation. However, the boundary structure of this stack at primes dividing $m$ is described in the Appendix.
		
	The natural morphism $\cA_g^{[m]} \to \cA_g$ that ``forgets the level structure" is finite, and since we chose compatible compactifications, it extends to a finite morphism $\pi_m\colon \otcA_g^{[m]} \to \otcA_g$. 


\subsection{Rational points and covers of bounded degree}
	\label{ss:extensions}

	The stack $\otcA_g$ is proper, but a rational point $x\in \otcA_g$ might not extend to an integral point: it might correspond to an abelian variety with {\em potentially} semistable, but not semistable, reduction. In this section we explain how one can use an integral extension of bounded degree to lift $x$ to a finite cover of $\otcA_g$ that is a scheme, where the lift of $x$ can be extended to an integral point. 

	We apply Lemma \ref{Lem:lifting-points}(3), which requires a  covering $Y \to \otcA_g$  by a scheme. This can be achieved using \cite[Theorem 1]{KreschVistoli}, but a more explicit construction in our situation is given in the following well-known lemma.

	\begin{lemma}
		\label{lem:products}
		Let $m = m_1m_2$ be a product of two coprime integers each $\geq 3$. Then the stack $\otcA_g^{[m]}$ is a scheme.\footnote{\ChDan{The authors would appreciate information on an early reference for this well-known argument.}}
	\end{lemma}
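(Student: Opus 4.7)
The plan is to argue locally on $\Spec\ZZ$, using the open cover
\[
\Spec\ZZ \;=\; \Spec\ZZ[1/m_1] \,\cup\, \Spec\ZZ[1/m_2],
\]
which is valid precisely because $\gcd(m_1,m_2)=1$. Since being a scheme is a local property on the base of an algebraic stack, it suffices to show that $\otcA_g^{[m]}|_{\ZZ[1/m_i]}$ is a scheme for each $i\in\{1,2\}$.

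For each $i$, I would use the forgetful morphism $\otcA_g^{[m]} \to \otcA_g^{[m_i]}$ that remembers only the level-$m_i$ structure. By the excerpt's discussion, the stack $\otcA_g^{[m_i]}|_{\ZZ[1/m_i]}$ is smooth over $\ZZ[1/m_i]$ by Faltings--Chai and, since $m_i \geq 3$, is in fact a scheme by \cite[IV.6.9]{Faltings-Chai}. On the other hand, by the definition given just before the statement, $\otcA_g^{[m]}$ is the normalization of $\otcA_g$ in the smooth stack $(\otcA_g^{[m]})_{\ZZ[1/m]}$; restricting to $\ZZ[1/m_i]$ and using the transitivity of normalization through the intermediate normal stack $\otcA_g^{[m_i]}|_{\ZZ[1/m_i]}$, one identifies
\[
\otcA_g^{[m]}|_{\ZZ[1/m_i]} \;=\; \text{normalization of } \otcA_g^{[m_i]}|_{\ZZ[1/m_i]} \text{ in } (\otcA_g^{[m]})_{\ZZ[1/m]}.
\]
Since $\otcA_g^{[m_i]}|_{\ZZ[1/m_i]}$ is a noetherian excellent normal scheme and the map $(\otcA_g^{[m]})_{\ZZ[1/m]} \to (\otcA_g^{[m_i]})_{\ZZ[1/m]}$ is finite (forgetting a level structure is representable and finite), the normalization is a finite scheme over $\otcA_g^{[m_i]}|_{\ZZ[1/m_i]}$, hence a scheme.

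The main subtlety I anticipate is verifying the transitivity-of-normalization identification in the stack setting, and in particular making sure it behaves correctly along the toroidal boundary at primes dividing $m$. This is where the choice of \emph{compatible} toroidal compactifications plays a role: the boundary of $\otcA_g^{[m]}$ maps compatibly to the boundary of $\otcA_g^{[m_i]}$, and both are normal. Once this identification is in place, the rest is formal: finite morphisms of algebraic stacks onto a scheme target are schemes (being relative $\mathbf{Spec}$ of a coherent algebra), and gluing over the open cover $\Spec\ZZ[1/m_1] \cup \Spec\ZZ[1/m_2] = \Spec\ZZ$ yields the conclusion.
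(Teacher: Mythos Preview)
Your proposal is correct and follows essentially the same route as the paper: cover $\Spec\ZZ$ by $\Spec\ZZ[1/m_1]\cup\Spec\ZZ[1/m_2]$ using coprimality, and over each piece identify $\otcA_g^{[m]}|_{\ZZ[1/m_i]}$ with the normalization of the scheme $(\otcA_g^{[m_i]})_{\ZZ[1/m_i]}$ in the scheme $(\otcA_g^{[m]})_{\ZZ[1/m]}$, which is therefore a scheme. The paper's proof is a terse version of exactly this argument; your added remarks on transitivity of normalization and finiteness of the forgetful map are the natural justifications for the step the paper states in one line.
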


	\begin{proof}
		First, recall that if $d \geq 3$ is an integer, the stack $(\otcA_g^{[d]})_{\ZZ[1/d]}$ is a scheme. It suffices to show that $(\otcA_g^{[m]})_{\ZZ[1/m_1]}$ and $(\otcA_g^{[m]})_{\ZZ[1/m_2]}$ are schemes. This in turn follows because for $i = 1$ and $2$, the stack $(\otcA_g^{[m]})_{\ZZ[1/m_i]}$ is the normalization of the scheme $(\otcA_g^{[m_i]})_{\ZZ[1/m_i]}$ in the scheme $(\otcA_g^{[m]})_{\ZZ[1/m]}$.
	\end{proof}

	Since $12=3\cdot 4$ is the product of two relatively prime integers each $\geq 3$ it follows that $\otcA_g^{[12]}$ is a scheme. Let $M = \deg \pi_{12 }\colon \otcA_g^{[12]} \to \otcA_g$. We obtain: 

	\begin{proposition}
		\label{Prop:lift-points}	
		Let $R$ be a Dedekind domain with field of fractions $K$. Fix a point $y \in \otcA_g^{[m]}(K)$. There is a finite extension $L/K$, with $[L:K]\leq M$ and $R_L\subset L$ the integral closure of $R$, and a point $\eta \in \otcA_g^{[m]}(R_L)$ lifting $y$.
	\end{proposition}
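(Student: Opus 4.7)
The plan is to reduce the statement directly to Lemma~\ref{Lem:lifting-points}(3) by exhibiting an appropriate cover of $\otcA_g^{[m]}$ by a scheme of the right degree. The natural candidate is the fibered product
\[
Y \ := \ \otcA_g^{[12]} \times_{\otcA_g} \otcA_g^{[m]},
\]
equipped with its projection $f\colon Y \to \otcA_g^{[m]}$ to the second factor. Once $Y$ is shown to have the structure required by the lemma, the conclusion will follow by applying that lemma to $f$ and the given point $y$.

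The first step is to verify that $Y$ is a scheme. The morphism $\pi_m\colon \otcA_g^{[m]} \to \otcA_g$ is finite, hence representable, so its base change along any morphism from a scheme is again a scheme. Applying this to the morphism $\otcA_g^{[12]} \to \otcA_g$, whose source is a scheme by Lemma~\ref{lem:products}, yields that $Y$ is a scheme. The projection $f$ is the base change of $\pi_{12}\colon \otcA_g^{[12]} \to \otcA_g$, so $f$ is finite and surjective of degree $M = \deg \pi_{12}$. Moreover $Y$ is proper over $\Spec \ZZ$, being finite over the proper stack $\otcA_g^{[m]}$, and $\otcA_g^{[m]}$ is itself proper over $\Spec \ZZ$.

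With these properties established, the hypotheses of Lemma~\ref{Lem:lifting-points}(3) are met by $f\colon Y \to \otcA_g^{[m]}$ and the point $y\in \otcA_g^{[m]}(K)$, and its conclusion produces the desired finite extension $L/K$ with $[L:K]\leq M$ together with a lift $\eta \in \otcA_g^{[m]}(R_L)$ of $y$. The key structural input — the only step that is not purely formal — is that $\otcA_g^{[12]}$ is a scheme; this is precisely the content of Lemma~\ref{lem:products} and the reason for the explicit factorization $12 = 3 \cdot 4$. Any potential worry about flatness of $\pi_{12}$ over primes dividing $12$ is not an obstacle here, since the proof of Lemma~\ref{Lem:lifting-points}(3) uses only that the fiber $\Spec K \times_{\otcA_g^{[m]}} Y$ is finite of degree at most $M$ over $K$, which follows from $f$ being finite of generic degree $M$.
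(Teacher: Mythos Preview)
Your argument is correct and rests on the same key input as the paper's proof, namely that $\otcA_g^{[12]}$ is a scheme (Lemma~\ref{lem:products}). The only difference is packaging: the paper first applies Lemma~\ref{Lem:lifting-points}(3) to $\pi_m(y)\in\otcA_g(K)$ using the cover $\otcA_g^{[12]}\to\otcA_g$, and then invokes Lemma~\ref{Lem:lifting-points}(1) for the representable map $\pi_m$ to lift the resulting integral point back to $\otcA_g^{[m]}$; you instead form the fiber product $\otcA_g^{[12]}\times_{\otcA_g}\otcA_g^{[m]}$ and apply Lemma~\ref{Lem:lifting-points}(3) in one shot, which amounts to the same thing. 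Your remark that the proof of Lemma~\ref{Lem:lifting-points}(3) only uses finiteness of the fiber over $K$ (rather than flatness of the whole map) is accurate and applies equally to the paper's invocation of that lemma.
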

	
	\begin{proof} 
		Applying Lemma \ref{Lem:lifting-points}(3) to the point $\pi_m(y) \in \otcA_g$, we have a point $\xi \in \otcA_g(R_L)$ lifting $\pi_m(y)$. Applying Lemma \ref{Lem:lifting-points}(1) to the representable morphism $\pi_m$, the point lifts to $\eta \in \otcA_g^{[m]}(R_L)$.
	\end{proof}
			

\subsection{Substacks}
	\label{ss:notation}

	Let $X \subseteq (\tcA_g)_K$ be a closed substack, let $X'\to X$ be a resolution of singularities, $X' \subset \overline X'$ a smooth compactification with $D = \overline X' \smallsetminus X'$ a normal crossings divisor. Assume that the rational map $f\colon\oX' \to \otcA_g$ is a morphism. Let $X'_m = X'\times_{\tcA_g}\tcA_g^{[m]}$, and let $\oX'_m \to\oX'\times_{\otcA_g} \ocA_g^{[m]}$ be a resolution of singularities with projections $\pi_m^X\colon \oX'_m\to \overline{X}'$ and $f_m\colon \oX'_m\to \otcA_g^{[m]}$. 	

	We now spread these objects over $\cO_{K,S}$ for a suitable finite set of places $S$ containing the archimedean places. Let $(\sX,\sD)$ be a normal crossings model of $(\oX',D)$ over $\Spec \cO_{K,S}$. As above, write $\sD = \sum_i \sD_i$. Such a model exists, even for Deligne-Mumford stacks, by \cite[Proposition~2.2]{Olsson}.

	Let $X(K)_{[m]}$ be the set of $K$-rational points of $X$ corresponding to abelian varieties $A/K$ admitting full level-$m$ structure. Define
	\begin{equation}
		\label{eq:toinfty}
		X(K)_{p\geq m_0} := \bigcup_{\substack{p \geq m_0 \\ p \textrm{ prime}}} X(K)_{[p]}.
	\end{equation}


\subsection{Intersection Multiplicities for integral and rational points}
	\label{ss:mult}
			
	Write $E$ for the boundary divisors of $\left(\otcA_g\right)_K$, and $\sE$ for its closure in $\otcA_g$, which is a Cartier divisor. We have an equality of divisors on $\oX'$
	\[
		f^*E = \sum a_iD_i,
	\]
where each $a_i>0$; see \cite[Equation (4.3)]{Alevels}. This equality extends over $\Spec \cO_{K,S}$ to
	\[
		f^*\sE = \sum a_i\sD_i.
	\]
By \cite[Proposition~4.1 or Equation~(4.1)]{Alevels}, we have that $\pi_m^* E = m E_m$ for some Cartier divisor $E_m\subset \left(\otcA_g^{[m]}\right)_K$. Spreading out $E_m$ to $\sE_m$ in $\otcA_g^{[m]}$ we obtain $\pi^*_m\sE = m\sE_m$; moreover, by Proposition \ref{prop:full level structure} in the appendix, $\sE_m$ is a Cartier divisor.

	Let $q \subset \cO_{K,S}$ be a nonzero prime ideal. Assume there are maps $\xi\colon\Spec \cO_{K,q} \to \sX$ and $\xi_m\colon\Spec \cO_{K,q} \to \sX_m$ such that $\xi = \pi_m^X\circ\xi_m$, and write $x \in \sX(\cO_{K,q})$ and $x_m \in \sX_m(\cO_{K,q})$ for the respective \emph{integral} points corresponding to $\xi$ and $\xi_m$. These objects and arrows fit together in the commutative diagram
	\[
		\xymatrix{
			& \sX_m\ar[r]^{f_m} \ar[d]^{\pi_m^X} & \otcA_g^{[m]} \ar[d]^{\pi_m} \\
			\Spec \cO_{K,q} \ar[ru]^{\xi_m}\ar[r]_\xi & \sX \ar[r]^f & \otcA_g
		}
	\]
We have an equality of divisors on $\Spec \cO_{K,q}$:
	\[
		\xi^* f^* \sE \,=\, \xi_m^* f_m^* \pi_m^* \sE \,=\, m \cdot\xi_m^* f_m^* \sE_m,
	\]
which translates to 
	\begin{equation*}
		\label{eq:multsatpts}
		\sum a_i \xi^* \sD_i = m \cdot \xi_m^* f_m^* \sE_m.
	\end{equation*}
The divisor on the left has multiplicity $\sum a_i n_q(\sD_i,x)$. If $x \in \sX(\cO_{K,q})$, then the intersection multiplicities $n_q(\sD_i,x)$ are integers, and we deduce that 
	\[
		m\ \Big|\ \sum a_in_q(\sD_i,x).
	\]
If the quantity $\sum a_i n_q(\sD_i,x)$ is nonzero then $m \leq  \sum a_i n_q(\sD_i,x)$, and thus
	\[
		m \leq \max\{a_i\} \sum n_q(\sD_i,x) = \max\{a_i\} n_q(\sD,x),
	\]
in other words,
	\[
		n_q(\sD,x) \geq \frac{m}{\max\{a_i\}}.
	\]

	Given a \emph{rational} point $x \in \sX(K)$, we apply Proposition \ref{Prop:lift-points}	and obtain an extension field $L/K$ with $[L:K] \leq M$ and an integral extension $\cO_{L,q}$ with a point $\xi \in \sX(\cO_{L,q})$ lifting $x$. Since for any $\frq \mid  q$ we have $e(\frq\mid q) \leq M$, Equation \eqref{Eq:n_q-extension} gives 
	\[
		n_q(\sD,x) \geq \frac{m}{M\max\{a_i\}}.
	\]
We summarize this discussion in the following proposition.

	\begin{proposition}
		\label{prop:betterthanzero}
		With notation as in \S\ref{ss:notation},  write $\alpha(X): = (M\cdot \max\{a_i\})^{-1} > 0$, which depends $X$, but not on $x$.  Let $x_m \in X_m'(K)$ be a rational point in $X'_m$ with image $x \in X'(K)$. Suppose that $n_q(\sD,x) > 0$. Then
		\begin{equation}
			\label{eq:boundingintersecmults}
			n_q(\sD,x) \geq  m\alpha(X).
		\end{equation}
	\end{proposition}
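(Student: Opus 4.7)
The proposition is essentially the summary of the multiplicity calculation carried out in the paragraphs immediately preceding it, so my plan is to extract and organize that argument cleanly.

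First, I would reduce to an integral-point calculation. Given the rational point $x_m \in X'_m(K)$ with image $x \in X'(K)$, I invoke Proposition~\ref{Prop:lift-points} applied to a composition $X'_m \to \oX'_m \to \otcA_g^{[12m]}$ (or a similar representable cover by a scheme of degree $M$) to produce a finite extension $L/K$ of degree $\leq M$ and an integral point $\xi \in \sX_m(\cO_{L,\frq})$ extending $x_m$, where $\frq \mid q$. Let $\xi_{\text{int}} = \pi_m^X \circ \xi \in \sX(\cO_{L,\frq})$ be the corresponding integral point above $x$; its intersection multiplicity with $\sD_i$ is denoted $n_\frq(\sD_i, \xi_{\text{int}})$, and by definition \eqref{Eq:n_q-extension} one has $n_q(\sD_i,x) = n_\frq(\sD_i,\xi_{\text{int}})/e(\frq\mid q)$ with $e(\frq\mid q) \leq M$.

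Second, I would read off the key divisibility. On $\oX'$ we have $f^*\sE = \sum a_i \sD_i$, and on $\otcA_g^{[m]}$ we have $\pi_m^*\sE = m\sE_m$ with $\sE_m$ a Cartier divisor (this last assertion in mixed characteristic requires Proposition~\ref{prop:full level structure} of the appendix). Pulling back along $\xi$ through the commutative square
\[
\xi^* f^* \sE \;=\; \xi^* (\pi_m^X)^* f_m^* \sE \;=\; \xi^* f_m^* \pi_m^* \sE \;=\; m \cdot \xi^* f_m^* \sE_m,
\]
and evaluating multiplicities at $\frq$ gives the divisibility
\[
m \,\Big|\, \sum_i a_i \, n_\frq(\sD_i, \xi_{\text{int}}).
\]

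Third, I would conclude. The hypothesis $n_q(\sD,x) > 0$ forces $\sum_i a_i n_\frq(\sD_i,\xi_{\text{int}}) > 0$, hence this integer is at least $m$. Using $n_\frq(\sD,\xi_{\text{int}}) = \sum_i n_\frq(\sD_i,\xi_{\text{int}}) \geq (\max_i a_i)^{-1} \sum_i a_i n_\frq(\sD_i,\xi_{\text{int}})$, we obtain $n_\frq(\sD,\xi_{\text{int}}) \geq m/\max\{a_i\}$, and dividing by $e(\frq\mid q) \leq M$ yields the desired bound $n_q(\sD,x) \geq m/(M\max\{a_i\}) = m\alpha(X)$.

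There is no real obstacle: the only subtle input is that $\sE_m$ is Cartier at primes dividing $m$, which would fail without the appendix; all other steps are bookkeeping about pullbacks of Cartier divisors and the behavior of multiplicities under base change. The main care needed is to set up the covering scheme and integral lift so that the commutative square used in the divisibility step actually exists over $\Spec \cO_{L,\frq}$, which is exactly what Proposition~\ref{Prop:lift-points} provides.
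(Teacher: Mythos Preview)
Your proposal is correct and mirrors the paper's argument exactly: the proposition simply records the discussion in \S\ref{ss:mult}, and your three steps---lift to an integral point over an extension of degree $\leq M$ via Proposition~\ref{Prop:lift-points}, pull back around the commutative square to obtain $m\mid\sum_i a_i\, n_\frq(\sD_i,\cdot)$ using that $\sE_m$ is Cartier (Proposition~\ref{prop:full level structure}), then divide by $e(\frq\mid q)\leq M$---are precisely the paper's. There is a minor notational slip in your displayed chain (the leftmost term should be $\xi_{\mathrm{int}}^* f^*\sE$, and the second term should involve $f^*$ rather than $f_m^*$, since $\sE$ lives on $\otcA_g$), but the intended identity $\xi_{\mathrm{int}}^* f^*\sE = \xi^* f_m^*\pi_m^*\sE = m\cdot\xi^* f_m^*\sE_m$ is exactly what the paper uses.
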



\subsection{Proof of Theorem~\ref{thm:main}}

	\begin{lemma}
		\label{lem:primelevels}
		Fix $\epsilon' > 0$. Then there is an integer $m_0 := m_0(\epsilon',K,X)$, such that for all primes $p \geq m_0$ and $x \in X(K)_{[p]}$ we have
		\[
			 d_K(\cT_x) \leq h_{\epsilon' D}(x) + O(1).	
		\]
	\end{lemma}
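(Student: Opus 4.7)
The plan is to bound $d_K(\cT_x)$ by the relative discriminant $d_K(L)$ of an auxiliary extension $L/K$ of bounded degree over which $A$ acquires a full level-$12$ structure, then to restrict the ramification of $L/K$ via N\'eron--Ogg--Shafarevich, and finally to invoke Proposition~\ref{prop:betterthanzero} to trade the resulting $N^{(1)}_K(D,x)$ term against a small multiple of $h_D(x)$ when $p$ is large.

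First I would lift the point. Because $x\in X(K)_{[p]}$ carries a full level-$p$ structure, it promotes to $x_p\in \otcA_g^{[p]}(K)$. Applying Proposition~\ref{Prop:lift-points} with $m=p$ produces a finite extension $L/K$ with $[L:K]\leq M:=\deg \pi_{12}$ and an integral extension $\eta\in \otcA_g^{[p]}(R_L)$ of $x_p$. Composing with $\pi_p$ and using that $\cT_x$ is the normalization of the closure of the image of $x$ in $\otcA_g$, the induced arrow $\Spec R_L\to \otcA_g$ factors through $\cT_x$. Lemma~\ref{Lem:discriminant} applied to the resulting $\Spec R_L\to \cT_x$ then gives
\[
	d_K(L) \ =\ \frac{1}{[L:K]}\deg \Omega_{\Spec R_L/\cT_x}\ +\ d_K(\cT_x)\ \geq\ d_K(\cT_x),
\]
reducing the problem to bounding $d_K(L)$.

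Next I would control the ramification of $L/K$. By construction, $L$ is the residue field at a point of the fiber of $\pi_{12}\colon \otcA_g^{[12]}\to \otcA_g$ over $x$, and such a point corresponds to a full level-$12$ structure on $A$ over $L$, which trivializes both $A[12]$ and $\mu_{12}$. Hence $L$ may be taken to sit inside $K(A[12],\zeta_{12})$. By N\'eron--Ogg--Shafarevich applied to $A[12]$ together with standard cyclotomic theory, $K(A[12],\zeta_{12})/K$ is unramified outside primes above $12$ and primes of bad reduction of $A$, which are precisely the primes $q$ with $n_q(\sD,x)>0$. The contribution to $d_K(L)$ from the fixed finite set of primes of residue characteristic dividing $M$ (absorbing all wild ramification) is $O(1)$ independently of $x$. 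At each tame ramified prime $q$, the standard different computation
\[
	d_K(L)_q\ =\ \frac{\log|\kappa(q)|}{[L:K]}\Bigl([L:K]-\sum_{\frp\mid q}f_\frp\Bigr)\ \leq\ \log|\kappa(q)|
\]
gives, after summing,
\[
	d_K(L)\ \leq\ \sum_{q\,:\,n_q(\sD,x)>0}\log|\kappa(q)|\ +\ O(1)\ =\ N^{(1)}_K(D,x)+O(1).
\]

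Finally I would exchange $N^{(1)}_K(D,x)$ for a fraction of $h_D(x)$. By Proposition~\ref{prop:betterthanzero}, each prime $q$ contributing to $N^{(1)}_K(D,x)$ satisfies $n_q(\sD,x)\geq p\alpha(X)$, so combined with~\eqref{Eq:counting-height},
\[
	p\alpha(X)\cdot N^{(1)}_K(D,x)\ \leq\ \sum_q n_q(\sD,x)\log|\kappa(q)|\ \leq\ h_D(x)+O(1),
\]
and therefore $d_K(\cT_x)\leq d_K(L)\leq h_D(x)/(p\alpha(X))+O(1)$. Taking $m_0$ large enough that $1/(m_0\alpha(X))\leq \epsilon'$ gives $d_K(\cT_x)\leq \epsilon' h_D(x)+O(1)=h_{\epsilon' D}(x)+O(1)$, as required. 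The main obstacle is the ramification analysis in the third paragraph: one must identify $L$ inside $K(A[12],\zeta_{12})$ precisely enough to apply N\'eron--Ogg--Shafarevich at boundary points (where $A$ degenerates and the level structure must be interpreted via Mumford's construction), and one must show that the wild contribution at primes dividing $M$ is genuinely absorbed into the $O(1)$ uniformly in $x$ and in $p$; coprimality of $p$ and $12$ (automatic for $p\geq 5$) ensures that primes above $p$ introduce no further wild ramification beyond what is already captured by the bad-reduction count.
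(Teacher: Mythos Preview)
Your approach differs from the paper's, and it has a real gap at primes above $p$ that forces you back to the very ingredient you omit.

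The paper argues as follows. Since $\otcA_g^{[p]}$ is a proper \emph{scheme} over $\ZZ[1/p]$ for $p\geq 3$, the lift $y\in\otcA_g^{[p]}(K)$ extends over $\cO_K[1/p]$; composing with $\pi_p$ shows that $\cT_x$ is unramified at every prime $q\nmid p$. Hence $d_K(\cT_x)=d_K(\cT_x)_p$, and the standard discriminant bound (after passing to the bounded-degree cover) gives $d_K(\cT_x)\leq\beta\log p$ with $\beta$ depending only on $g$. On the other side, the paper invokes the {\sc Flexor--Oesterl\'e}/{\sc Silverberg} point-count: for $p$ exceeding $(1+2^{[K:\QQ]/2})^2$, the abelian variety $A$ cannot have good reduction at any $\frq\mid 2$, so $n_\frq(\sD,x)>0$, and Proposition~\ref{prop:betterthanzero} yields $h_{\epsilon'D}(x)\geq\epsilon'\alpha(X)(\log 2)\,p$. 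Comparing $\log p$ against $p$ finishes the proof.

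Your route through $d_K(L)$ and N\'eron--Ogg--Shafarevich discards the key observation that $\cT_x$ is ramified only above $p$: the auxiliary field $L$ is typically ramified at \emph{every} prime of bad reduction of $A$, so $d_K(L)$ is much larger than $d_K(\cT_x)$. More importantly, your identification ``primes of bad reduction of $A$ $=$ primes $q$ with $n_q(\sD,x)>0$'' fails exactly at primes $q\mid p$. There $A$ need not be semistable (the level-$p$ structure forces semistability only over $\cO_K[1/p]$), and if $A$ has additive, potentially good reduction at such a $q$, then $K(A[12])/K$ is ramified at $q$ while $n_q(\sD,x)=0$. These primes contribute to $d_K(L)$ but not to $N^{(1)}_K(D,x)$; your final sentence asserts they are ``captured by the bad-reduction count'', but they are not. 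Their total contribution is of order $\log p$, and absorbing a $\log p$ term into $h_{\epsilon'D}(x)$ requires knowing that $h_{\epsilon'D}(x)$ grows with $p$ --- precisely the {\sc Flexor--Oesterl\'e} input the paper uses and you have left out. So your argument, once repaired, does not avoid the point-counting step; it only adds the N\'eron--Ogg--Shafarevich detour on top of it.
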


	\begin{proof}
		Let $A/K$ be the abelian variety of dimension $g$ associated with $x \in X(K)_{[p]}$. Since $A$ has full level-$p$ structure, we know that $\#A[p](K) \geq p^g$. Thus, if $\frq$ is a prime ideal of $K$ that does not divide $p$, then $\#A[p](\kappa(\frq)) \geq p^g$ (see~\cite[C.1.4]{HindrySilverman}). We choose $m_0 \geq 8$, so $p\neq 2$, freeing us to pick $\frq \mid 2$. This implies that $\kappa(\frq) = 2^{f(\frq|q)} \leq 2^{[K:\QQ]}$.
	
		We follow {\sc Flexor--Oesterl\'e} \cite[Th\'eor\`eme  3]{Flexor-Oesterle} and {\sc Silverberg} \cite[Theorem 3.3]{Silverberg-finite-order}, see also  {\sc Kamienny} \cite[\S6(2a)]{Kamienny-root-p}. Suppose now that $A$ has good reduction at $\frq$, so that, by the {\sc Lang-Weil} estimates, we have 
		\[
			\#A(\kappa(\frq)) \leq (1 + \kappa(\frq)^{1/2})^{2g} \leq (1 + 2^{[K:\QQ]/2})^{2g}.
		\] 
Thus, if $A$ has good reduction at $\frq\mid 2$, we have
		\[
			p \leq (1 + 2^{[K:\QQ]/2})^2 := \gamma.
		\]
In other words, if $p > \gamma$, then $A$ must have bad reduction at primes $\frq \mid 2$, so $n_\frq(\sD,x) > 0$. By Proposition~\ref{prop:betterthanzero} the stronger inequality~\eqref{eq:boundingintersecmults} holds with $m = p$. We use this to see that if $p > \gamma$, then as in the estimate \eqref{Eq:counting-height} we have
		\begin{equation}
			\label{eq:epsm}
			h_{\epsilon' D}(x) + O(1)\geq 
			\epsilon'\sum_{\frq} n_\frq(D,x) \log |\kappa(\frq)| \geq \ 
			\epsilon'\sum_{\frq \mid 2} p\alpha(X) \log |\kappa(\frq)| \geq 
			\epsilon'(\alpha(X)\log 2) \cdot p,
		\end{equation}
so  $h_{\epsilon' D}(x)$ grows at least linearly in $p$.
	
		Now we crudely bound $d_K(\cT_x)$ from above. Note that $x$ is an integral point away from $p$. As in~\S\ref{ss:mult}, passing to a cover of finite \emph{bounded} degree $\leq M = M(g)$, we may replace $x$ with an integral point $y$ in such a way that $[K(y):K] \leq M$. The discriminant ideal of $\cT_x$ divides the discriminant ideal of the extension $K(y)/K$; we compare their factors at $p$. Let $d_K(K(y))_{p}$ denote the contribution at $p$ of $d_K(K(y))$; ignoring negative terms coming from the discriminant of $\cO_K$, we have the estimate 
		\[
			d_K(\cT_x) \leq d_K(K(y))_{p} \leq \frac{v_p(|\Disc(\cO_{K(y)})|)}{[K(y):K]}\cdot \log p,
		\]
where $v_p$ denotes the usual $p$-adic valuation. By~\cite[Proof of~III.2.13]{Neukirch}, we have 
		\[
			v_p(|\Disc(\cO_{K(y)})|) \leq [K(y):K](1 + [K(y):K])
		\]
Hence
		\begin{equation}
			\label{eq:discbounded}
			d_K(\cT_x) \leq (1 + [K(y):K])\cdot \log p \ := \beta\cdot \log p
		\end{equation}
grows at most linearly in $\log p$, and the result follows. 
	\end{proof}

	\begin{lemma}
		\label{lem:primelevels-truncated}
		Fix $\epsilon' > 0$. Then there is an integer $m_0 := m_0(\epsilon',K,X)$, such that for all primes $p \geq m_0$, if $x \in X(K)_{[p]}$ then
		\[
			N^{(1)}_K(D,x)\ \  \leq\ \  h_{\epsilon' D}(x) + O(1).
		\]
	\end{lemma}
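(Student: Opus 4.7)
The plan is to observe that this lemma is essentially an immediate application of Proposition~\ref{prop:betterthanzero}: the truncated counting function weights each boundary prime $\frq$ by $\log|\kappa(\frq)|$ regardless of multiplicity, whereas the height $h_D(x)$ weights by the actual multiplicity $n_\frq(\sD,x)\log|\kappa(\frq)|$. Since Proposition~\ref{prop:betterthanzero} forces every nonzero multiplicity to be at least $p\alpha(X)$, the truncated counting function is dwarfed by $h_D(x)/p$ once $p$ is large.

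More concretely, given $x \in X(K)_{[p]}$, the full level-$p$ structure on the corresponding abelian variety $A/K$ produces a $K$-rational lift $x_p \in X'_p(K)$ over $x$. First I would invoke Proposition~\ref{prop:betterthanzero} with $m = p$: at every prime $\frq \in \Spec \cO_{K,S}$ with $n_\frq(\sD,x) > 0$ we obtain
\[
n_\frq(\sD,x) \ \geq \ p\alpha(X),
\]
where $\alpha(X) > 0$ depends only on $X$. Since $x \in X(K)$ we have $[K(x):K] = 1$, and so
\[
N^{(1)}_K(D,x) \ = \sum_{\substack{\frq\,:\,n_\frq(\sD,x)>0}} \log |\kappa(\frq)|
\ \leq\ \frac{1}{p\alpha(X)} \sum_{\frq} n_\frq(\sD,x) \log|\kappa(\frq)|.
\]
Applying the standard estimate~\eqref{Eq:counting-height} to the right-hand sum yields
\[
N^{(1)}_K(D,x) \ \leq\ \frac{1}{p\alpha(X)}\bigl(h_D(x) + O(1)\bigr).
\]

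To conclude, I would set $m_0 := \lceil 1/(\epsilon'\alpha(X))\rceil$, so that for every prime $p \geq m_0$ the coefficient $1/(p\alpha(X))$ is at most $\epsilon'$, giving
\[
N^{(1)}_K(D,x) \ \leq\ \epsilon' h_D(x) + O(1) \ =\ h_{\epsilon' D}(x) + O(1),
\]
as required. There is no real obstacle here: the entire content sits in Proposition~\ref{prop:betterthanzero} (which in turn uses the ramification of $\otcA_g^{[p]}\to \otcA_g$ along the boundary proved in the appendix), and the lemma is simply the observation that truncated counting shrinks by a factor of $p\alpha(X)$ compared to the weighted counting function.
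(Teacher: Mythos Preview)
Your argument is correct and matches the paper's proof essentially line for line: both invoke Proposition~\ref{prop:betterthanzero} to get $n_\frq(\sD,x)\geq p\alpha(X)$ at every boundary prime, bound the truncated counting function by $(p\alpha(X))^{-1}\sum_\frq n_\frq(\sD,x)\log|\kappa(\frq)|\leq (p\alpha(X))^{-1}(h_D(x)+O(1))$ via~\eqref{Eq:counting-height}, and then choose $m_0>1/(\epsilon'\alpha(X))$.
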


	\begin{proof}
		If $x \in X(K)_{[p]}$, then whenever  $n_q(\sD,x) > 0$,  Proposition~\ref{prop:betterthanzero} implies that the stronger inequality~\eqref{eq:boundingintersecmults} holds. Hence
		\begin{align*}
			p\alpha(X)\,N^{(1)}_K(D,x) &= \sum_{n_q(\sD,x) > 0} p\alpha(X) \log |\kappa(q)|\\
			&\leq\sum_{n_q(\sD,x) > 0} n_q(\sD,x) \log |\kappa(q)|\\
			&\leq h_{D}(x) + O(1),
		\end{align*}
where in the last inequality  we use the estimate \eqref{Eq:counting-height}. Taking $m_0 > 1/(\epsilon'\alpha(X))$ we have $p\alpha(X) > 1/\epsilon'$, hence $N^{(1)}_K(D,x) \leq h_{\epsilon'D}(x) + O(1)$.
	\end{proof}

	\begin{proof}[Proof of Theorem~\ref{thm:main}]
		We proceed by Noetherian induction. For each integer $i \geq 1$, let
		\[
			W_i = \overline{\tcA_g(K)_{p\geq i}}.
		\]
Note that $W_i$ is a closed subset of $\cA_g$, and that $W_i \supseteq W_{i+1}$ for every $i$. The chain of $W_i$ must stabilize by the Noetherian property of the Zariski topology of $\cA_g$.  Say $W_n = W_{n+1} = \cdots$. 

		We claim that $W_n$ has dimension $\leq 0$. Suppose not, and let $X \subseteq W_n$ be an irreducible component of positive dimension. Fix $\epsilon > 0$ so that $K_X + (1 - \epsilon)D$ is big: such an $\epsilon$ exists by~\cite[Corollary~1.7]{Alevels}. Next, choose a $\QQ$-ample divisor $H$ such that $K_X + (1-\epsilon)D - H$ is effective, and apply Proposition~\ref{prop:VojtaToVojtaDM}, with $r = 1$, to conclude there is a Zariski-closed proper subset $Z \subset X$ such that if $x \in X(K) \smallsetminus Z(K)$, then
		\[
			N_K^{(1)}(D,x) + d_K(\cT_x)\ \  \geq\ \  h_{K_X(D)}(x) - \delta h_H(x) - O(1).
		\]
By Lemma \ref{lem:primelevels-truncated}, for all primes $p > m_0$ any $x \in X(K)_{[p]}$
 satisfies $N^{(1)}_K(D,x) \leq h_{(\epsilon/2)D}(x) + O(1)$. On the other hand, Lemma~\ref{lem:primelevels} guarantees that, after possibly enlarging  $m_0$, for all primes $p \geq m_0$ any $x \in X(K)_{[p]}$ satisfies $h_{(\epsilon/2) D}(x) + O(1) \geq d_K(\cT_x)$.  If also $x \notin Z(K)$ we deduce that
		\[
			  h_{\epsilon D}(x) \ \ \geq\ \  h_{K_X(D)}(x) - \delta h_H(x) - O(1).
		\]
By our choice of $H$ and~\cite[Theorem B.3.2(e)]{HindrySilverman}, we obtain
		\[
			O(1) \geq (1 - \delta)h_{K_X((1 -\epsilon )D)}(x).
		\]
Using~\cite[Theorem~B.3.2(e,g)]{HindrySilverman} we conclude that the set of $x \in X(K)_{p\geq m_0}$ outside $Z(K)$ is not dense, and thus $X(K)_{p\geq m_0}$ is contained in a Zariski-closed proper subset of $X$. On the other hand, if $m_0 > n$, then $W_{m_0} = W_n$, so $X$ is also an irreducible component of $W_{m_0}$, and hence $\overline{X(K)_{p\geq m_0}} = X$, a contradiction. This proves that $\dim W_n \leq 0$.
		
		Finally, if $W_n$ is a finite set of points, then it is well-known  that the full level structures that can possibly appear in any of the corresponding finitely many geometric isomorphism classes are bounded. Indeed if $\frq\in M_K^0$ is a fixed prime of potential good reduction, all twists with full level-$p$  structure with $p>2, \frq\nmid p$ have good reduction at $\frq$. Since the $p$-torsion points inject modulo $\frq$ we have $p\leq (1+ N\frq^{1/2})^2.$ Alternatively, following {\sc Manin} \cite[\S 3]{Manin}, there are only finitely many isomorphism classes over $K_\frq$, and for each the torsion subgroup is finite.
	\end{proof}

\appendix

\section{Compactifications with full level structure \\ by Keerthi Madapusi Pera}

The purpose of this appendix is to lay out certain facts about toroidal compactifications of the moduli of principally polarized abelian varieties with full level structure at `bad' primes. This is a straight-forward extension of the theory of~\cite{Faltings-Chai}, and could possibly also be extracted from the work of K.-W. Lan~\cite{Lan2016-bd}. 

\subsection{}
Equip $\ZZ^{2g} = \ZZ^g\oplus\ZZ^g$ with the standard non-degenerate symplectic pairing
\[
\psi\colon ((u_1,v_1),(u_2,v_2))\mapsto u_1v_2^t - u_2v_1^t.
\]
For every integer $m\in\ZZ_{>0}$,  equip $(\ZZ/m\ZZ)^{2g}$ with the non-degenerate pairing $\psi_m$ inherited from $\psi$. A \textsf{symplectic level-$m$} structure on a principally polarized abelian scheme $(A,\lambda)$ over a base $S$ will consist of a pair $(\eta,\phi)$, where 
\[
\eta\colon\underline{(\ZZ/m\ZZ)}^{2g}\xrightarrow{\simeq}A[m]\;;\;\phi\colon\mu_m\xrightarrow{\simeq}\underline{\ZZ/m\ZZ}
\]
are isomorphisms of group schemes over $S$ such that $\eta$ carries the pairing $\psi_m$ to the pairing $\phi\circ e_\lambda$ on $A[m]$. Here,
\[
e_\lambda\colon A[m]\times A[m]\to \mu_m
\]
is the symplectic Weil pairing induced by the polarization $\lambda$.

\subsection{}
Let $\widetilde{\mathcal{A}}_g$ be the algebraic stack over $\ZZ$ parameterizing principally polarized abelian varieties of dimension $g$. Over $\ZZ[1/m]$, we have a finite \'etale morphism of algebraic stacks 
\[
\widetilde{\mathcal{A}}_{g,m}[1/m]\to \widetilde{\mathcal{A}}_g[1/m] 
\]
parameterizing symplectic level-$m$ structures on the universal abelian scheme over $\widetilde{\mathcal{A}}_g[1/m]$. By a classical argument of Serre, points of $\widetilde{\mathcal{A}}_{g,m}[1/m]$ have trivial automorphism schemes as soon as $m\geq 3$. 

Fix any toroidal compactification $\overline{\widetilde{\mathcal{A}}}_g$ of $\widetilde{\mathcal{A}}_g$ (see~\cite[Ch. IV]{Faltings-Chai}). We now obtain an open immersion
\[
\widetilde{\mathcal{A}}_{g,m}\hookrightarrow\overline{\widetilde{\mathcal{A}}}_{g,m}
\]
of algebraic stacks over $\ZZ$ by taking the normalization of the open immersion
\[
\widetilde{\mathcal{A}}_{g}\hookrightarrow\overline{\widetilde{\mathcal{A}}}_{g}
\]
in $\widetilde{\mathcal{A}}_{g,m}[1/m]$.

The stack $\widetilde{\mathcal{A}}_{g,m}$ has no obvious moduli interpretation over $\ZZ$, and we know little about the singularities of its fibers over primes dividing $m$. However, this is not an obstruction to studying its general structure at the boundary. For this, we will need some information about the stratification of the boundary. 

\subsection{}
We direct the reader to~\cite[\S 1]{mp:toroidal} for the notion of a \textsf{principally polarized $1$-motif $(Q,\lambda)$} over a base $S$. Here, we will note that it consists of a $1$-motif $Q$---that is, a two-term complex $u\colon X\to J$, where $J$ is a semi-abelian scheme over $S$ that is an extension of an abelian scheme by a torus, and $X$ is a locally constant sheaf of finite free abelian groups---and an isomorphism $\lambda\colon Q\xrightarrow{\simeq}Q^\vee$ to its dual $1$-motif $Q^\vee$. 

We will say that $(Q,\lambda)$ is \textsf{of type $(r,s)$} for $r,s\in\ZZ_{\geq 0}$ if the abelian part of $J$ has dimension $s$ and if $X = \underline{\ZZ}^r$. The polarization $\lambda$ then canonically identifies the toric part of $J$ with $\Gm^r$. 

Suppose that $(Q,\lambda)$ is of type $(r,s)$, and set $g=r+s$. Given $m\in\ZZ_{>0}$, one has the $m$-torsion $Q[m]$ of the $1$-motif $Q$: this is a finite flat group scheme over $S$ of rank $2g$, and the polarization equips it with a non-degenerate Weil pairing $e_\lambda$ with values in $\mu_m$. 

Let $B$ be the abelian part of $J$. Then there is a natural ascending $3$-step filtration 
\[
0= W_{-3}Q[m] \subset W_{-2}Q[m] = \mu_m^r\subset W_{-1}Q[m]\subset W_0Q[m] = Q[m],
\]
where $W_{-2}Q[m]$ is isotropic for the Weil pairing, $W_{-1}Q[m]$ is its orthogonal complement, $\mathrm{gr}^W_{-1}Q[m]$ is identified with $B[m]$, compatibly with Weil pairings, and $\mathrm{gr}^W_0Q[m]$ is identified with $(\ZZ/m\ZZ)^r$. The induced pairing
\[
 (\ZZ/m\ZZ)^r\times \mu_m^r = \mathrm{gr}^W_0Q[m] \times W_{-2}Q[m]\xrightarrow{e_\lambda}\mu_m
\]
is the canonical one. 

Let $I_r\subset\ZZ^{2g}$ be the isotropic subspace spanned by the first $r$ basis vectors of the first copy of $\ZZ^g$. We have identifications
\[
\ZZ^r = I_r\;;\; \ZZ^r = \ZZ^{2g}/I_r^{\perp},
\]
so that the induced non-degenerate pairing
\[
\ZZ^r \times \ZZ^r = I_r \times \ZZ^{2g}/I_r^{\perp} \xrightarrow{\psi} \ZZ.
\]
is the standard symmetric pairing $(u,v)\mapsto uv^t$.

A \textsf{symplectic level-$m$} structure on $(Q,\lambda)$ is a pair $(\eta,\phi)$, where
\[
\eta\colon\underline{(\ZZ/m\ZZ)}^{2g}\xrightarrow{\simeq}Q[m]\;;\;\phi\colon\mu_m\xrightarrow{\simeq}\underline{\ZZ/m\ZZ}
\]
are isomorphisms of group schemes over $S$ such that $\eta$ carries the pairing $\psi_m$ to the pairing $\phi\circ e_\lambda$ on $Q[m]$ and the subspace $I_r/mI_r$ onto $W_{-2}Q[m]$, so that the induced isomorphism
\[
(\ZZ/m\ZZ)^r = I_r/mI_r \xrightarrow{\simeq} W_{-2}Q[m] = \mu_m^r \xrightarrow[\phi^{-1}]{\simeq}(\ZZ/m\ZZ)^r
\]
is the identity.

We now obtain a moduli stack $\widetilde{\mathcal{Y}}_{r,s}$ over $\ZZ$ of principally polarized $1$-motifs, and a finite \'etale cover
\[
\widetilde{\mathcal{Y}}_{r,s,m}[1/m]\to \widetilde{\mathcal{Y}}_{r,s}[1/m] 
\]
over $\ZZ[1/m]$, parameterizing symplectic level-$m$ structures on the universal principally polarized $1$-motif.

\subsection{}
Consider the moduli stack $\widetilde{\mathcal{Y}}_{r,0}$: This parameterizes principally polarized $1$-motives of the form $u\colon\ZZ^r\to \Gm^r$. Alternatively, it parameterizes symmetric pairings $\ZZ^r\times \ZZ^r\to\Gm$. As such, it is represented over $\ZZ$ by the torus with character group 
$\mathsf{S}_r = \mathrm{Sym}^2\ZZ^r$. 

Similarly, by the discussion in \cite[Ch. IV, \S 6.5]{Faltings-Chai}, the morphism 
\[
\widetilde{\mathcal{Y}}_{r,0,m}[1/m]\to \widetilde{\mathcal{Y}}_{r,0}[1/m] 
\]
parameterizes lifts $\frac{1}{m}\ZZ^r\to \Gm^r$ of the universal homomorphism $\ZZ^r\to\Gm^r$, and so is represented over $\ZZ[1/m]$ by the torus with character group $(1/m)\mathsf{S}_r$. The natural map
\[
\widetilde{\mathcal{Y}}_{r,0,m}[1/m]\to \widetilde{\mathcal{Y}}_{r,0}[1/m] 
\]
corresponds to the map of tori induced by the inclusion $\mathsf{S}_r\hookrightarrow (1/m)\mathsf{S}_r$ of character groups. 

Therefore, the normalization $\widetilde{\mathcal{Y}}_{r,0,m}$ of $\widetilde{\mathcal{Y}}_{r,0}$ in $\widetilde{\mathcal{Y}}_{r,0,m}[1/m]$ is represented over $\ZZ$ by the torus with character group $(1/m)\mathsf{S}_r$, and is in particular smooth over $\ZZ$.

\subsection{}
When $s>0$, $\widetilde{\mathcal{Y}}_{r,s}$ permits a similar, but slightly more elaborate description. We have the obvious map $\widetilde{\mathcal{Y}}_{r,s}\to \widetilde{\mathcal{A}}_s$ assigning to a polarized $1$-motif $(Q,\lambda)$ of type $(r,s)$ the abelian part of the semi-abelian scheme $J$. 

There is a natural action of the torus $\mathcal{Y}_{r,0}$ on $\mathcal{Y}_{r,s}$: Given a polarized $1$-motif $(Q_0,\lambda_0)$ of type $(r,0)$ associated with a homomorphism $u_0\colon\underline{\ZZ}^r\to \Gm^r$ and a polarized $1$-motif $(Q,\lambda)$ of type $(r,s)$ associated with $u\colon\underline{\ZZ}^r\to J$, the product $u_0\cdot u\colon\underline{\ZZ}^r\to J$ corresponds to another principally polarized $1$-motif of type $(r,s)$.

The quotient of $\widetilde{\mathcal{Y}}_{r,s}$ by this action is naturally identified with the abelian scheme $\widetilde{\mathcal{C}}_{r,s}\to \widetilde{\mathcal{A}}_s$ that parameterizes homomorphisms 
\[
v\colon\ZZ^r\to B, 
\]
where $B$ is the universal abelian scheme over $\widetilde{\mathcal{A}}_s$. So we obtain a tower of algebraic stacks:
\begin{equation}\label{eqn:tower no level}
\widetilde{\mathcal{Y}}_{r,s}\to \widetilde{\mathcal{C}}_{r,s}\to \widetilde{\mathcal{A}}_s,
\end{equation}
where the first morphism is a $\widetilde{\mathcal{Y}}_{r,0}$-torsor, and the second is an abelian scheme. 

From the discussion in~\cite[Ch. IV,\S 6.5]{Faltings-Chai}, we find that the stack $\widetilde{\mathcal{Y}}_{r,s,m}[1/m]$ admits a compatible tower structure:
\begin{equation}\label{eqn:tower level m}
\widetilde{\mathcal{Y}}_{r,s,m}[1/m]\to \widetilde{\mathcal{C}}_{r,s,m}[1/m]\to \widetilde{\mathcal{A}}_{s,m}[1/m].
\end{equation}
Here, $\widetilde{\mathcal{C}}_{r,s,m}[1/m]$ parameterizes homomorphisms
\[
v_m\colon \frac{1}{m}\ZZ^r\to B,
\]
where $B$ is the universal abelian scheme over $\widetilde{\mathcal{A}}_{s,m}[1/m]$, and $\widetilde{\mathcal{Y}}_{r,s,m}[1/m]$ parameterizes homomorphisms
\[
u_m\colon\frac{1}{m}\ZZ^r\to J,
\]
lifting $v_m$, where $J$ is the universal semi-abelian scheme over $\widetilde{\mathcal{C}}_{r,s,m}[1/m]$ parameterized by the homomorphism
\[
m\cdot v_m\colon \ZZ^r\to B \xrightarrow{\simeq} B^\vee.
\]
It is therefore naturally a $\widetilde{\mathcal{Y}}_{r,0,m}[1/m]$-torsor over $\widetilde{\mathcal{C}}_{r,s,m}[1/m]$.

From this description, it is clear that the normalization of the tower~\eqref{eqn:tower no level} in the tower~\eqref{eqn:tower level m} gives us a tower 
\begin{equation}\label{eqn:tower level m integral}
\widetilde{\mathcal{Y}}_{r,s,m}\to \widetilde{\mathcal{C}}_{r,s,m}\to \widetilde{\mathcal{A}}_{s,m},
\end{equation}
where 
\[
\widetilde{\mathcal{C}}_{r,s,m}\to \widetilde{\mathcal{A}}_{s,m} 
\]
is still an abelian scheme parameterizing homomorphisms $v_m\colon\frac{1}{m}\ZZ^r\to B$ (with $B$ the universal abelian scheme over $\widetilde{\mathcal{A}}_{s,m}$), and 
\[
\widetilde{\mathcal{Y}}_{r,s,m}\to \widetilde{\mathcal{C}}_{r,s,m}
\]
is once again a $\widetilde{\mathcal{Y}}_{r,0,m}$-torsor parameterizing lifts $u_m\colon\frac{1}{m}\ZZ^r\to J$ of $v_m$, where $J$ is still classified by $v = m\cdot v_m$.

In particular, the morphism 
\begin{equation}\label{eqn:pushforward}
\widetilde{\mathcal{Y}}_{r,s,m}\to \widetilde{\mathcal{Y}}_{r,s}\times_{\widetilde{\mathcal{C}}_{r,s}}\widetilde{\mathcal{C}}_{r,s,m}
\end{equation}
is obtained via pushforward of torsors along the morphism
\[
\widetilde{\mathcal{Y}}_{r,0,m}\to \widetilde{\mathcal{Y}}_{r,0}
\]
of tori, which is of course canonically isomorphic to the multiplication-by-$m$ map
\[
\widetilde{\mathcal{Y}}_{r,0}\xrightarrow{[m]}\widetilde{\mathcal{Y}}_{r,0}.
\]

\subsection{}
Fix a rational polyhedral cone $\sigma\subset (\mathsf{S}_r)_{\QQ}Q$: this gives us twisted toric embeddings
\[
\widetilde{\mathcal{Y}}_{r,s}\hookrightarrow \widetilde{\mathcal{Y}}_{r,s}(\sigma)\;;\; \widetilde{\mathcal{Y}}_{r,s,m}\hookrightarrow \widetilde{\mathcal{Y}}_{r,s,m}(\sigma).
\]
The complements of these embeddings admit a natural stratification with a unique closed stratum, which we denote by $\mathcal{Z}_{r,s}(\sigma)$ and $\mathcal{Z}_{r,s,m}(\sigma)$, respectively.

Let $\widehat{\widetilde{\mathcal{Y}}}_{r,s}(\sigma)$ and $\widehat{\widetilde{\mathcal{Y}}}_{r,s}(\sigma)$ be the formal completions of $\widetilde{\mathcal{Y}}_{r,s}(\sigma)$ and $\widetilde{\mathcal{Y}}_{r,s,m}(\sigma)$, respectively, along their closed strata. By abuse of notation, write$\widehat{\widetilde{\mathcal{Y}}}_{r,s,m}(\sigma)[1/m]$  for the completion of  $\widetilde{\mathcal{Y}}_{r,s,m}(\sigma)[1/m]$ along its closed stratum.

Note that the morphism
\begin{equation}\label{eqn:pushforward sigma}
\widetilde{\mathcal{Y}}_{r,s,m}(\sigma)\to \widetilde{\mathcal{Y}}_{r,s}(\sigma)\times_{\widetilde{\mathcal{C}}_{r,s}}\widetilde{\mathcal{C}}_{r,s,m}
\end{equation}
is obtained via contraction along the multiplication-by-$m$ map on $\widetilde{\mathcal{Y}}_{r,0}$.

\subsection{}
Let $\Gamma(\sigma)\subset\mathrm{GL}_r(\ZZ)$ be the stabilizer of $\sigma$, and let $\Gamma_m(\sigma)\leq\Gamma(\sigma)$ be the subgroup of matrices that are trivial mod $m$: these are both finite groups, and $\Gamma_m(\sigma)$ is trivial as soon as $m\geq 3$.

By the main results of~\cite[Ch. IV]{Faltings-Chai}, the toroidal compactification $\overline{\widetilde{\mathcal{A}}}_{g}$ admits a stratification by locally closed substacks $\mathcal{Z}(r,\sigma)$ equipped with an isomorphism to $\Gamma(\sigma)\backslash\mathcal{Z}_{r,g-r}(\sigma)$ for some $r\leq g$, and some $\sigma\subset (\mathsf{S}_r)_{\QQ}$, and such that this isomorphism extends to one of formal completions
\[
\bigl(\overline{\widetilde{\mathcal{A}}}_{g}\bigr)^\wedge_{\mathcal{Z}(r,\sigma)}\xrightarrow{\simeq}\Gamma(\sigma)\backslash\widehat{\widetilde{\mathcal{Y}}}_{r,g-r}(\sigma).
\]
Faltings and Chai use the language of degeneration data. For a formulation using our language of $1$-motifs, we guide the reader to \cite[\S 3.1.5]{stroh:thesis}. 

The main idea is that, on every formally \'etale affine chart
\[
\mathrm{Spf}(R,I) \to \widehat{\widetilde{\mathcal{Y}}}_{r,g-r}(\sigma)
\]
one obtains a principally polarized $1$-motif $(Q,\lambda)$ of type $(r,g-r)$ over the fraction field $K(R)$ of $R$ associated with a semi-abelian scheme $J\to \Spec R$, and a period map $u\colon\ZZ^r\to J(K(R))$. This period map `degenerates' along $\Spec R/I$, and a construction of Mumford, explained in~\cite[Ch. III]{Faltings-Chai}, now gives us a principally polarized abelian scheme $(A,\psi)$ over $K(R)$ with semi-abelian degeneration over $R$, and equipped with a canonical symplectic identification $Q[m]\xrightarrow{\simeq}A[m]$, for every integer $m$. The pair $(A,\psi)$ now gives a map $\Spec K(R)\to \widetilde{\mathcal{A}}_g$, which extends to a map 
\[
\Spec R\to \overline{\widetilde{\mathcal{A}}}_{g},
\]
which in turn induces a map
\[
\mathrm{Spf}(R,I)\to \bigl(\overline{\widetilde{\mathcal{A}}}_{g}\bigr)^\wedge_{\mathcal{Z}(r,\sigma)}
\]
of formal algebraic stacks. These maps are now glued together to give the inverse of the desired isomorphism of formal neighborhoods.

Similarly, $\overline{\widetilde{\mathcal{A}}}_{g,m}[1/m]$ admits a compatible stratification by locally closed substacks $\mathcal{Z}_m(r,\sigma)[1/m]$ equipped with an isomorphism to $\Gamma(\sigma)\backslash\mathcal{Z}_{r,g-r,m}(\sigma)[1/m]$, and such that this isomorphism extends to one of formal completions
\[
\bigl(\overline{\widetilde{\mathcal{A}}}_{g,m}[1/m]\bigr)^\wedge_{\mathcal{Z}_m(r,\sigma)[1/m]}\xrightarrow{\simeq}\Gamma_m(\sigma)\backslash\widehat{\widetilde{\mathcal{Y}}}_{r,g-r,m}(\sigma)[1/m].
\]

\begin{proposition}
\label{prop:strat extend}
The stratification on $\overline{\widetilde{\mathcal{A}}}_{g,m}[1/m]$ extends to one of $\overline{\widetilde{\mathcal{A}}}_{g,m}$ by substacks $\mathcal{Z}_m(r,\sigma)$ equipped with an isomorphism to $\mathcal{Z}_{r,g-r,m}(\sigma)$, extending to an isomorphism
\[
\bigl(\overline{\widetilde{\mathcal{A}}}_{g,m}\bigr)^\wedge_{\mathcal{Z}_m(r,\sigma)}\xrightarrow{\simeq}\Gamma_m(\sigma)\backslash\widehat{\widetilde{\mathcal{Y}}}_{r,g-r,m}(\sigma).
\]
\end{proposition}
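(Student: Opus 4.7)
The plan is to deduce the integral statement from the level-free version by ``normalizing both sides'' of the Faltings--Chai isomorphism. The level-free isomorphism
\[
\bigl(\overline{\widetilde{\mathcal{A}}}_{g}\bigr)^\wedge_{\mathcal{Z}(r,\sigma)}\xrightarrow{\simeq}\Gamma(\sigma)\backslash\widehat{\widetilde{\mathcal{Y}}}_{r,g-r}(\sigma)
\]
is available integrally, and both $\overline{\widetilde{\mathcal{A}}}_{g,m}$ and $\widetilde{\mathcal{Y}}_{r,g-r,m}(\sigma)$ are \emph{by definition} normalizations of their level-free analogues in the corresponding $\ZZ[1/m]$-étale covers. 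So the candidate $\mathcal{Z}_m(r,\sigma)$ should be the preimage (or, equivalently, closure of the generic fiber) of $\mathcal{Z}(r,\sigma)$ in $\overline{\widetilde{\mathcal{A}}}_{g,m}$; the task is to identify its formal neighborhood.

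First I would set up the local picture. Working formally locally, pick a formally étale chart $\mathrm{Spf}(R,I)\to \widehat{\widetilde{\mathcal{Y}}}_{r,g-r}(\sigma)$, and let $R'$ be the normalization of $R$ in the finite étale $R[1/m]$-algebra corresponding to the pullback of $\widetilde{\mathcal{Y}}_{r,g-r,m}(\sigma)[1/m]$. By the construction of $\widetilde{\mathcal{Y}}_{r,g-r,m}(\sigma)$ as a normalization, $\mathrm{Spf}(R',IR')$ is the corresponding chart of $\widehat{\widetilde{\mathcal{Y}}}_{r,g-r,m}(\sigma)$. On the Siegel side, Mumford's construction applied over $R$ produces the polarized abelian scheme $(A,\psi)/K(R)$ with semi-abelian degeneration, hence a map $\Spec R\to \overline{\widetilde{\mathcal{A}}}_g$. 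Pulling back the universal symplectic level-$m$ structure on $A[m]$ over $K(R')$ and taking normalization in $\overline{\widetilde{\mathcal{A}}}_g$ produces a map $\Spec R'\to \overline{\widetilde{\mathcal{A}}}_{g,m}$, precisely because $\overline{\widetilde{\mathcal{A}}}_{g,m}$ is defined by such a normalization. This shows the two sides of the putative integral isomorphism are given by the same normalization $R\rightsquigarrow R'$.

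Next I would verify that these local matches glue into the claimed global isomorphism. The key points are: (i) normalization commutes with formally étale base change, so the charts described above assemble into a single morphism
\[
\widehat{\widetilde{\mathcal{Y}}}_{r,g-r,m}(\sigma)\ \longrightarrow\ \bigl(\overline{\widetilde{\mathcal{A}}}_{g,m}\bigr)^\wedge_{\mathcal{Z}_m(r,\sigma)};
\]
(ii) this morphism is finite, and after inverting $m$ it recovers the known Faltings--Chai isomorphism, hence is birational and finite between normal formal stacks, so is itself an isomorphism; (iii) the $\Gamma(\sigma)$-action on $\widehat{\widetilde{\mathcal{Y}}}_{r,g-r}(\sigma)$ lifts canonically to $\widehat{\widetilde{\mathcal{Y}}}_{r,g-r,m}(\sigma)$ by the universal property of normalization, and by the definition of a symplectic level-$m$ structure, only the subgroup $\Gamma_m(\sigma)\subset \Gamma(\sigma)$ of matrices trivial modulo $m$ preserves the level structure --- the quotient $\Gamma(\sigma)/\Gamma_m(\sigma)$ gets absorbed into the étale covering and disappears upon taking invariants. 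Taking the $\Gamma(\sigma)$-quotient on the target then exhibits the desired identification
\[
\bigl(\overline{\widetilde{\mathcal{A}}}_{g,m}\bigr)^\wedge_{\mathcal{Z}_m(r,\sigma)}\xrightarrow{\simeq}\Gamma_m(\sigma)\backslash\widehat{\widetilde{\mathcal{Y}}}_{r,g-r,m}(\sigma).
\]
Restricting this isomorphism to the closed stratum produces the identification $\mathcal{Z}_m(r,\sigma)\cong \mathcal{Z}_{r,g-r,m}(\sigma)$ (no quotient on the right, since $\Gamma_m(\sigma)$ acts freely on the closed stratum as is shown in Faltings--Chai), and compatibility with the $\ZZ[1/m]$ stratification is automatic from the construction.

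The main obstacle I expect is step (ii): checking that the formal completion of the normalization agrees with the normalization of the formal completion, and that no extra components appear over primes dividing $m$. Excellence of $\ZZ$ and the finiteness of $\pi_m$ (in particular, of the morphism~\eqref{eqn:pushforward sigma}, which is obtained by pushforward along the multiplication-by-$m$ map on the torus $\widetilde{\mathcal{Y}}_{r,0}$, a finite flat map between normal schemes) should suffice: the normalization of $\widehat{R}$ in a finite normal extension of $\widehat{R}[1/m]$ coincides with the $I$-adic completion of the corresponding normalization of $R$. Once this commutativity is in place, the remaining bookkeeping with the groups $\Gamma(\sigma)\supset\Gamma_m(\sigma)$ is formal, finishing the proof.
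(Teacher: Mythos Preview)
Your proposal follows essentially the same strategy as the paper: construct the map from the boundary chart to $\overline{\widetilde{\mathcal{A}}}_{g,m}$ via Mumford's construction plus the level structure, then argue it is an isomorphism using ``finite $+$ isomorphism over a dense open $+$ normality.'' The paper packages this slightly differently---it works pointwise with complete local rings (inverting the residue characteristic $p$ rather than $m$), then invokes Artin approximation to pass to an \'etale-local description of $\eta_m$, and only afterwards extracts the stratum isomorphism and the formal-neighborhood statement---whereas you stay at the level of formal completions throughout and appeal to excellence for the commutation of normalization with completion; but the underlying mechanism is the same. One small wrinkle: in your step~(ii) the map $\widehat{\widetilde{\mathcal{Y}}}_{r,g-r,m}(\sigma)\to(\overline{\widetilde{\mathcal{A}}}_{g,m})^\wedge$ is not literally an isomorphism before passing to the $\Gamma_m(\sigma)$-quotient (for $m\le 2$), so the ``finite birational between normal'' argument should be applied after that quotient---though since $\Gamma_m(\sigma)$ is trivial for $m\ge 3$ this is largely cosmetic.
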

\begin{proof}
Let 
\[
\mathrm{Spf}(R,I) \to \widehat{\widetilde{\mathcal{Y}}}_{r,g-r,m}(\sigma)
\]
be a formally \'etale affine chart. The tautological principally polarized $1$-motif $(Q,\lambda)$ over $\Spec K(R)$ is now equipped with a canonical symplectic level $m$ structure, which in turn  also equips the principally polarized abelian scheme $(A,\psi)$, obtained from it via Mumford's construction, with a symplectic level $m$ structure. 

This implies that the associated map $\Spec K(R)\to \widetilde{\mathcal{A}}_{g}$ has a canonical lift
\[
\Spec K(R)\to \widetilde{\mathcal{A}}_{g,m},
\]
which then extends to a map $\Spec R\to \overline{\widetilde{\mathcal{A}}}_{g,m}$. 

Assume now that $R$ is a complete local ring of $\widetilde{\mathcal{Y}}_{r,g-r,m}(\sigma)$ with maximal ideal $I$ and algebraically closed residue field, and let $R'$ be the complete local ring of $\overline{\widetilde{\mathcal{A}}}_{g,m}$ at the image of the geometric closed point of $\Spec R$. We claim that the induced map $R'\to R$ is an isomorphism. This follows from two observations: First, it is a \emph{finite} map of normal local rings. Second, by the description of the stratification in characteristic $0$, if $p$ is the residue characteristic of $R$, then for any maximal ideal $\mathfrak{m}'\subset R'[1/p]$, the ideal $\mathfrak{m} = \mathfrak{m}'R[1/p]$ is once again maximal, and the induced map
\[
\widehat{R'[1/p]}_{\mathfrak{m}'} \to \widehat{R[1/p]}_{\mathfrak{m}}
\]
is an isomorphism. The second assertion shows, via faithfully flat descent, that every element of $R$ is contained in $R'[1/p]$, and the first shows that it must already be contained in $R'$.

Let $\eta_m\colon\overline{\widetilde{\mathcal{A}}}_{g,m}\to \overline{\widetilde{\mathcal{A}}}_{g}$ be the natural finite map. Combining the previous paragraph with Artin approximation, we find that $\eta_m$ is \'etale locally isomorphic to the finite map
\begin{align}\label{eqn:Ym to Y}
\mathcal{Y}_{r,g-r,m}(\sigma)\to \mathcal{Y}_{r,g-r}(\sigma)
\end{align}
for varying choices of $r$ and $\sigma$.

We claim that the reduced stack $\mathcal{Z}_m(r,\sigma)$ underlying the locally closed substack $\eta_m^{-1}(\mathcal{Z}(r,\sigma))\subset \overline{\widetilde{\mathcal{A}}}_{g,m}$ is normal. This can be checked on complete local rings using the observation that the reduced substack underlying the pre-image of $\mathcal{Z}_{r,g-r}(\sigma)$ under the map~\eqref{eqn:Ym to Y} is normal.

Moreover, from this and the fact that the locally closed substacks $\mathcal{Z}(r,\sigma)$ stratify $\overline{\widetilde{\mathcal{A}}}_g$, one can deduce that the locally closed substacks $\mathcal{Z}_m(r,\sigma)$ stratify $\overline{\widetilde{\mathcal{A}}}_{g,m}$.

By normality of the target, the map
\[
\mathcal{Z}_{r,g-r,m}(\sigma)[1/m]\xrightarrow{\simeq}\mathcal{Z}_m(r,\sigma)[1/m]\hookrightarrow \overline{\widetilde{\mathcal{A}}}_{g,m}
\]
extends uniquely to a map
\[
\mathcal{Z}_{r,g-r,m}(\sigma)\to \overline{\widetilde{\mathcal{A}}}_{g,m}
\]
lifting the composition
\[
\mathcal{Z}_{r,g-r,m}(\sigma)\to \mathcal{Z}_{r,g-r}(\sigma)\to \overline{\widetilde{\mathcal{A}}}_{g}.
\]
This extension necessarily factors through a finite map 
\[
\Gamma_m(\sigma)\backslash\mathcal{Z}_{r,g-r,m}\to \mathcal{Z}_m(r,\sigma),
\]
which is an isomorphism in the generic fiber. By looking at complete local rings, it is seen to be a finite \'etale map, and hence an isomorphism.

The last assertion about the formal completions now follows from~\cite[(A.3.2)]{mp:toroidal}. 
\end{proof}

From this, and the explicit nature of the map~\eqref{eqn:pushforward sigma}, we immediately obtain:
\begin{proposition}
\label{prop:divisor divisible}
Let $\eta_m\colon\overline{\widetilde{\mathcal{A}}}_{g,m}\to \overline{\widetilde{\mathcal{A}}}_{g}$ be the natural finite map, and let $\mathcal{D}_m\subset \overline{\widetilde{\mathcal{A}}}_{g,m}$ be the complement of $\widetilde{\mathcal{A}}_{g,m}$, equipped with its reduced scheme structure. 

Then $\mathcal{D}_m$ is a relative Cartier divisor over $\ZZ$. Moreover, if $\mathcal{D}\subset \overline{\widetilde{\mathcal{A}}}_g$ is the boundary divisor with its reduced scheme structure, then we have an equality of Cartier divisors $\eta_m^*\mathcal{D} = m\cdot \mathcal{D}_m$.
\end{proposition}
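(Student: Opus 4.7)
The plan is to reduce both assertions---that $\mathcal{D}_m$ is a relative Cartier divisor over $\ZZ$ and that $\eta_m^*\mathcal{D}=m\mathcal{D}_m$---to a purely toric computation carried out formally locally at each boundary stratum. Both properties can be checked after faithfully flat base change, so by Proposition~\ref{prop:strat extend} it suffices to verify them on the covers $\widehat{\widetilde{\mathcal{Y}}}_{r,g-r,m}(\sigma)$ and $\widehat{\widetilde{\mathcal{Y}}}_{r,g-r}(\sigma)$ appearing in the descriptions of the formal completions along the strata $\mathcal{Z}_m(r,\sigma)$ and $\mathcal{Z}(r,\sigma)$. The finite groups $\Gamma_m(\sigma)\leq\Gamma(\sigma)$ may permute the irreducible components of the boundary but fix the total reduced divisor, so divisor-theoretic assertions descend through the finite quotients.

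By~\eqref{eqn:pushforward sigma}, the map on these covers factors through the base change $\widehat{\widetilde{\mathcal{Y}}}_{r,g-r}(\sigma)\times_{\widetilde{\mathcal{C}}_{r,g-r}}\widetilde{\mathcal{C}}_{r,g-r,m}$, whose intervening factor is a finite flat morphism of abelian schemes transverse to the toroidal direction and contributing nothing to the boundary. The essential content is thus the pushforward of the $\widetilde{\mathcal{Y}}_{r,0}$-torsor structure along the multiplication-by-$m$ self-map of the torus $\widetilde{\mathcal{Y}}_{r,0}$, dual to the inclusion of character lattices $\mathsf{S}_r\hookrightarrow(1/m)\mathsf{S}_r$. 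Fixing a toroidal compactification coming from a fan of smooth cones, a local chart on $\widetilde{\mathcal{Y}}_{r,0,m}(\sigma)$ takes the form $\Spec\ZZ[s_1,\ldots,s_k,s_{k+1}^{\pm 1},\ldots]$ with the reduced boundary cut out by the monomial $s_1\cdots s_k$, while the corresponding chart on $\widetilde{\mathcal{Y}}_{r,0}(\sigma)$ has coordinates $t_i=s_i^m$. Hence $\mathcal{D}_m$ is locally cut out by a non-zero-divisor in a polynomial ring over $\ZZ$, making it a relative Cartier divisor over $\ZZ$, and the relations $t_i=s_i^m$ yield $\eta_m^*\{t_i=0\}=m\{s_i=0\}$, summing to $\eta_m^*\mathcal{D}=m\mathcal{D}_m$.

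The main obstacle is ensuring that the computation descends cleanly through the quotient by $\Gamma_m(\sigma)$ for small $m$, where this group can be non-trivial. The plan for this is to exploit the $\Gamma_m(\sigma)$-equivariance of both $\mathcal{D}_m$ and $\eta_m^*\mathcal{D}$, which is built into the construction, so that the equality descends canonically to the quotient stack. Alternatively, one may pull back further to $\overline{\widetilde{\mathcal{A}}}_{g,km}$ for $k$ chosen so that $km\geq 3$ (which is a scheme by Lemma~\ref{lem:products}), verify the equality of Cartier divisors there, and descend by the finite flatness of $\overline{\widetilde{\mathcal{A}}}_{g,km}\to\overline{\widetilde{\mathcal{A}}}_{g,m}$.
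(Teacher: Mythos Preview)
Your argument is correct and is essentially the same as the paper's: the paper derives the proposition in one line from Proposition~\ref{prop:strat extend} and ``the explicit nature of the map~\eqref{eqn:pushforward sigma},'' and you have simply written out the toric computation (that the map on character lattices is $\mathsf{S}_r\hookrightarrow(1/m)\mathsf{S}_r$, hence locally $t_i=s_i^m$) that the paper leaves implicit. One small caution: in your alternative paragraph, Lemma~\ref{lem:products} concerns $\overline{\widetilde{\mathcal{A}}}_g^{[m]}$ rather than the symplectic-level stack $\overline{\widetilde{\mathcal{A}}}_{g,m}$ of the appendix, and finite \emph{flatness} of $\overline{\widetilde{\mathcal{A}}}_{g,km}\to\overline{\widetilde{\mathcal{A}}}_{g,m}$ at primes dividing $km$ is not established here; since your primary $\Gamma_m(\sigma)$-equivariance argument already suffices, you could drop the alternative.
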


\begin{remark}
\label{rem:base change}
Note that the above proposition remains true if we replace $\widetilde{\mathcal{A}}_{g,m}$ and its compactification with the normalizations of their base change over $\mathcal{O}_K$, for any number field $K/\QQ$.
\end{remark}

\begin{proposition}
\label{prop:full level structure}
Let $\widetilde{\mathcal{A}}^{[m]}_g$ and $\overline{\widetilde{\mathcal{A}}}^{[m]}_g$ be as in \S~5. Let $\pi_m\colon \overline{\widetilde{\mathcal{A}}}_{g}^{[m]}\to \overline{\widetilde{\mathcal{A}}}_{g}$ be the natural finite map, and let $\mathcal{D}^{[m]}\subset \overline{\widetilde{\mathcal{A}}}_{g}^{[m]}$ be the complement of $\widetilde{\mathcal{A}}_{g}^{[m]}$, equipped with its reduced scheme structure. 

Then $\mathcal{D}^{[m]}$ is a relative effective Cartier divisor over $\ZZ$.\footnote{That is, it is an effective Cartier divisor that is flat over $\ZZ$.} Moreover, we have $\pi_m^*\mathcal{D} = m\cdot \mathcal{D}^{[m]}$.
\end{proposition}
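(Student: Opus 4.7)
My first instinct is to reduce to Proposition~\ref{prop:divisor divisible} by base changing to $\mathcal{O}_K := \ZZ[\zeta_m]$. After fixing $\zeta_m$ to trivialize $\mu_m$ over $\mathcal{O}_K[1/m]$, any full level-$m$ structure $\eta'\colon A[m]\xrightarrow{\sim}(\ZZ/m\ZZ)^g\oplus\mu_m^g$ corresponds to an isomorphism $\eta\colon(\ZZ/m\ZZ)^{2g}\xrightarrow{\sim}A[m]$ whose pullback of the Weil pairing is a non-degenerate $\mu_m$-valued symplectic form on $(\ZZ/m\ZZ)^{2g}$. The class of this form in $\mathrm{GL}_{2g}(\ZZ/m\ZZ)/\mathrm{GSp}_{2g}(\ZZ/m\ZZ)$ is locally constant on $(\widetilde{\mathcal{A}}_g^{[m]})_{\mathcal{O}_K[1/m]}$, producing a decomposition into open-and-closed substacks indexed by this coset set, each one non-canonically isomorphic to $(\widetilde{\mathcal{A}}_{g,m})_{\mathcal{O}_K[1/m]}$. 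Taking normalizations in $(\overline{\widetilde{\mathcal{A}}}_g)_{\mathcal{O}_K}$ on both sides extends this to an identification of the normalization of $(\overline{\widetilde{\mathcal{A}}}_g^{[m]})_{\mathcal{O}_K}$ with a corresponding disjoint union of copies of $\overline{\widetilde{\mathcal{A}}}_{g,m,\mathcal{O}_K}$, compatibly with the map $\pi_m$, which restricts on each summand to a copy of the map $\eta_m$ appearing in Proposition~\ref{prop:divisor divisible}.

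Proposition~\ref{prop:divisor divisible} combined with Remark~\ref{rem:base change} then yields, on this normalization, both the Cartier property for the boundary divisor and the formula $\pi_m^*\mathcal{D}=m\cdot\mathcal{D}^{[m]}$. The main obstacle, however, is that although $\overline{\widetilde{\mathcal{A}}}_g^{[m]}$ is normal over $\ZZ$ by construction, its base change to $\mathcal{O}_K$ need not be normal at primes above those dividing $m$, where $\ZZ\to\mathcal{O}_K$ ramifies. Thus invertibility of the ideal sheaf on the normalized base change does not directly descend to Cartierness of the Weil divisor $\mathcal{D}^{[m]}$ on $\overline{\widetilde{\mathcal{A}}}_g^{[m]}$ itself: faithfully flat descent applies only along the base change $(\overline{\widetilde{\mathcal{A}}}_g^{[m]})_{\mathcal{O}_K}\to\overline{\widetilde{\mathcal{A}}}_g^{[m]}$, not along its composition with the finite birational map from the normalization. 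Bridging this gap is the crux of the proof.

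The route I would ultimately follow is to repeat the machinery of this appendix in the full-level setting, rather than argue by pure descent. Concretely, introduce principally polarized $1$-motifs with full level-$m$ structure (replacing the symplectic condition of \S~A.3 with the isomorphism~\eqref{eq:levelstructure}), build the analogous moduli stacks $\widetilde{\mathcal{Y}}_{r,s}^{[m]}$ with their toric description, verify that the analog of the contraction map~\eqref{eqn:pushforward sigma} is once again essentially multiplication by $m$ on the toric coordinates of $\widetilde{\mathcal{Y}}_{r,0}$, and obtain a stratification of $\overline{\widetilde{\mathcal{A}}}_g^{[m]}$ parallel to Proposition~\ref{prop:strat extend} via Mumford's construction. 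From the resulting explicit local formal models at the boundary one reads off, exactly as in the proof of Proposition~\ref{prop:divisor divisible}, both Cartierness of $\mathcal{D}^{[m]}$ and the formula $\pi_m^*\mathcal{D}=m\cdot\mathcal{D}^{[m]}$; the base-change decomposition described above serves as a useful independent check on the conclusion.
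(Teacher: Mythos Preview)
Your setup matches the paper's exactly: base change to $\ZZ[\mu_m]$, decompose $\widetilde{\mathcal{A}}^{[m]}_g$ and its compactification over $\ZZ[1/m,\mu_m]$ as a disjoint union of copies of the symplectic-level moduli, and invoke Proposition~\ref{prop:divisor divisible} together with Remark~\ref{rem:base change}. You also correctly isolate the one non-formal point: $(\overline{\widetilde{\mathcal{A}}}^{[m]}_g)_{\ZZ[\mu_m]}$ need not be normal at primes dividing $m$, so Cartierness on its normalization does not descend for free.

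Where you diverge from the paper is in the resolution. The paper does \emph{not} rebuild the appendix for full level structures. Its proof is three sentences: from the decomposition over $\ZZ[1/m,\mu_m]$ one gets the statement over $\ZZ[1/m]$ by \'etale descent; from Proposition~\ref{prop:divisor divisible} and Remark~\ref{rem:base change} one gets it on the normalized base change to $\ZZ[\mu_m]$; and ``combining the two'' yields the statement over $\ZZ$. The combining step is terse, but the intended argument is short: the components of $\mathcal{D}^{[m]}$ are horizontal (they map finitely to components of the horizontal divisor $\mathcal{D}$), so the Weil-divisor identity $\pi_m^*\mathcal{D}=m\cdot\mathcal{D}^{[m]}$ already follows from its validity over $\ZZ[1/m]$, and the local Cartier property at boundary points in characteristic $p\mid m$ is read off from the explicit formal models on the normalized $\ZZ[\mu_m]$-base change, which at such points are exactly those of Proposition~\ref{prop:strat extend}.

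Your alternative of reproving Proposition~\ref{prop:strat extend} in the full-level setting would certainly work and is logically clean, but it is substantially more labor than the paper expends; the paper's point is precisely that one can piggy-back on the symplectic case rather than rerun Mumford's construction. Your worry about descent through normalization is a fair reading of a compressed passage, but it does not force the long route you propose.
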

\begin{proof}
Over $\ZZ[1/m,\mu_m]$, $\widetilde{\mathcal{A}}^{[m]}_g$ and $\overline{\widetilde{\mathcal{A}}}^{[m]}_g$ can be identified with a disjoint union of copies of $(\widetilde{\mathcal{A}}_{g,m})_{\ZZ[1/m,\mu_m]}$ and $(\overline{\widetilde{\mathcal{A}}}_{g,m})_{\ZZ[1/m,\mu_m]}$, respectively. So the result is true over $\ZZ[1/m]$. Moreover, by Proposition~\ref{prop:divisor divisible} and Remark~\ref{rem:base change}, it is true after a change of scalars to $\ZZ[\mu_m]$ followed by normalization. Combining the two, we find that the result is already true over $\ZZ$.
\end{proof}

\bibliographystyle{plain}             
\bibliography{levels}

\end{document}